\newcommand*\Z{\mathbb{Z}}
\newcommand*\N{\mathbb{N}}
\newcommand*\Q{\mathbb{Q}}
\newcommand*\C{\mathbb{C}}
\newcommand*\A{\mathcal{A}}
\newcommand*\R{\mathcal{R}}
\newcommand*\V{\mathbb{V}}
\renewcommand*\S{\mathscr{S}}
\newcommand{\stirling}[2]{\genfrac{[}{]}{0pt}{}{#1}{#2}}
\newcommand{\defeq}{\stackrel{\textnormal{def}}{=}}
\newtheorem{thm}{Theorem}[section]
\newtheorem{prop}[thm]{Proposition}
\newtheorem{lemma}[thm]{Lemma}
\theoremstyle{definition}
\newtheorem{defi}[thm]{Definition}
\newtheorem{example}[thm]{Example}
\newtheorem{question}[thm]{Question}
\theoremstyle{remark}
\newtheorem{remark}[thm]{Remark}
\DeclareMathOperator{\codim}{codim}
\DeclareMathOperator{\dd}{d}
\DeclareMathOperator{\gr}{gr}
\DeclareMathOperator{\im}{Im}
\DeclareMathOperator{\Ind}{Ind}
\DeclareMathOperator{\pr}{pr}
\DeclareMathOperator{\Res}{Res}
\DeclareMathOperator{\rk}{rk}
\DeclareMathOperator{\SL}{SL}
\DeclareMathOperator{\shape}{Shape}
\DeclareMathOperator{\supp}{Supp}
\DeclareMathOperator{\sgn}{sgn}
\DeclareMathOperator{\T}{T}
\newcommand*{\bigcdot}{%
  {\mathbin{\mathpalette\bigcdot@{}}}%
}
\newcommand*{\bigcdot@scalefactor}{.75}
\newcommand*{\bigcdot@widthfactor}{1.4}
\newcommand*{\bigcdot@}[2]{%
  \sbox0{$#1\vcenter{}$}
  \sbox2{$#1\cdot\m@th$}%
  \hbox to \bigcdot@widthfactor\wd2{%
    \hfil
    \raise\ht0\hbox{%
      \scalebox{\bigcdot@scalefactor}{%
        \lower\ht0\hbox{$#1\bullet\m@th$}%
      }%
    }%
    \hfil
  }%
}
\g@addto@macro{\UrlBreaks}{\UrlOrds}
\g@addto@macro{\UrlBreaks}{%
\do\/\do\d%
}
\begin{document}
\title[Combinatorics and cohomology of elliptic arrangements]{The connection between combinatorics and cohomology of elliptic arrangements}
\author[R. Pagaria]{Roberto Pagaria}
\address{Roberto Pagaria}
\address{Scuola Normale Superiore\\ Piazza dei Cavalieri 7, 56126 Pisa\\ Italia}\email{roberto.pagaria@sns.it}

\begin{abstract}
We prove that the cohomology algebra of elliptic arrangements depends only on the poset of layers.
In the particular case of braid elliptic arrangements, we study the cohomology as representation and we compute some Hodge numbers.
Finally, we discuss $1$-formality for graphic elliptic arrangements.
\end{abstract}
\maketitle
\vspace{-1cm}
\section*{Introduction}

Elliptic arrangements are a generalizations of hyperplane arrangements and a particular case of Lie group arrangements (definitions in \cite{OT92, LTY17}).
The Betti numbers of Lie group arrangements are known in the case of non-compact group and this numbers depend only on the associated matroid (see \cite{LTY17}).
The smallest non-trivial cases of compact Lie group arrangements are elliptic arrangements.

Little is known about elliptic arrangements: a model for the cohomology, a formula for the Euler characteristic (\cite{Bibby16}) and the cohomology with coefficients in a generic local system (\cite{LV12}).
We have investigated the dependency of mixed Hodge numbers from the associated matroid and computed some of this numbers for the braid elliptic arrangements.

An example of two central elliptic arrangements with the same Tutte polynomial but with different Poincar\'e polynomials is show in \cite{Pag18}.

The main tool -- used for computing the cohomology -- is a model that can be obtained as the second page of the Leray spectral sequence \cite{Bibby16, Dupont15} or as a specialization of the  K\v{r}\'\i\v{z} model \cite{Kriz94, Totaro96, Bezrukavnikov, AAB14, Azam15}.

We start the discussion in \cref{sec:ellipt_arr} giving a combinatorial model for the rational cohomology of the complement of any elliptic arrangement.
This model depends only on the poset of layers of the elliptic arrangement.
In \cref{sec:braid_arr} we specialize the model to the case of braid elliptic arrangements and we find some linearly independent cohomology classes using the bi-grading given by the Mixed Hodge Structure.
Then we study the representation theory of the combinatorial model and of the cohomology both as representations of $\mathfrak{S}_n$ and of $\operatorname{SL}_2(\mathbb{Q})$ (see \cref{sec:rep_theory}).
Finally, we discuss in \cref{sec:formality} the formality in the setting of graphic elliptic arrangements, a class of arrangements including the braid elliptic arrangements.


\section{Elliptic arrangements}\label{sec:ellipt_arr}
\subsection{Definitions}\label{subsect:def}
Let $E$ be an elliptic curve and consider, in the Cartesian product, $E^n$ the divisor $D$ defined by the equation 
\[\sum_{i=0}^n m_i P_i = Q\]
which depends on parameters $m_i\in \Z$, $i=1,\dots, n$ and a point $Q \in E$.
We denote by $\tilde{D}=\{(P_1, \dots, P_n) \in E^n \mid \sum_{i=0}^n m_i P_i = O\}$ the translated of $D$ at the origin $O$ of the elliptic curve.
The divisor $D$ is connected if and only if $\gcd (m_i | i=1,\dots, n)=1$;from now on 
we will consider only connected divisors.
An \textit{elliptic arrangement} $\A=\{D_1, \dots, D_l\}$ is a finite collection of such divisors in $E^n$.
Notice that the equations $\sum_{i=0}^n m_i P_i = Q$ and $\sum_{i=0}^n (-m_i) P_i = -Q$ define the same divisor;
a \textit{defining matrix} $N$ in $\operatorname{M}(n,l;\Z)$ for an elliptic arrangement $\A$ is a matrix whose $i^{\textnormal{th}}$ column defines the divisor $\tilde{D}_i$.

We denote by $M(\A)\subset E^n$ the \textit{complement} of these divisors:
\[ M(\A) \stackrel{\textnormal{def}}{=} E^n \setminus \bigcup_{D \in \A} D .\] 
We call \textit{layer} a connected component of the intersection of some divisors that belong to the arrangement $\A$.
If all possible non empty intersections of divisors in $\A$ are connected, we say that $\A$ is \textit{unimodular}.
The intersection of some divisors locally looks like the intersection of hyperplanes in $\C^n$: let $W$ be a layer and $p\in W $ a point, than the hyperplanes $\T_p D \subset \T_p E^n \simeq \C^n$ form an hyperplane arrangement 
\[\A_W=\{T_p D \mid D \supset W\}\]
that, up to translations does not depend on the point $p \in W$.

Define the \textit{poset of layers} as the partially ordered set $\S=\S(\A)$ whose elements are the layers of $\A$ ordered by reverse inclusions.
The poset of layers is a graded poset with rank function $\rk: \S \rightarrow \N$ given by codimension in $E^n$
\[ \rk (W) \stackrel{\textnormal{def}}{=} \codim_{E^n} (W).\]

This setting is analogous to the setting of the toric case \cite{DeCP05}, with the difference that the equation
\[nP=Q\]
for fixed $Q\in E$ and $n$, has $n^2$ solutions instead of $n$ solutions in the toric case.

\subsection{A model for the cohomology algebra} \label{subsect:model}
We briefly recall the result of \cite{Bibby16} and of \cite{Dupont15}.

Consider the bi-graded algebra $E_2^{\bigcdot,\bigcdot}(M(\A))$ over $\Q$ defined by
\[ E_2^{\bigcdot,\bigcdot}(M(\A)) \stackrel{\textnormal{def}}{=} \bigoplus_{W \in \S} H^\bigcdot(W,\Q)(-\rk W) \otimes_\Q H^{\rk W} (M(\A_W);\Q)\]
where $M(A_W)= \C^n \setminus \bigcup_{H \in \A_W} H$ is the complement of the hyperplane arrangement $\A_W$ and $(-\rk W)$ is the Tate twist of the mixed Hodge structure on the cohomology.
The Tate twist and the ring of coefficients of the cohomology are understood.
The homogeneous component of degree $(p,q)$ is
\[E_2^{p,q}(M(\A))= \bigoplus_{\rk W =q} H^p(W,\Q) \otimes_\Q H^{q} (M(\A_W);\Q).\]
Consider the layers $W_1, W_2$ and $L$ such that $L\geq W_1$, $L\geq W_2$ and $\rk L= \rk W_1 + \rk W_2$, let $\eta_i: M(\A_{L}) \rightarrow M(\A_{W_i})$ and $\gamma_i: L \rightarrow W_i$ be the natural inclusions.
Define the product of the two elements $x_i \otimes \omega_i$ in the homogeneous components $H^{p_i}(W_i,\Q) \otimes_\Q H^{\rk W_i} (M(\A_{W_i});\Q) $, $i=1,2$, as
\begin{equation}\label{eq:prod_astratto}
(x_1 \otimes \omega_1) \cdot (x_2 \otimes \omega_2)  \stackrel{\textnormal{def}}{=} (-1)^{p_2 \rk W_1}(\gamma_1^*x_1 \smile \gamma_2^*x_2) \otimes (\eta_1^* \omega_1 \smile \eta_2^* \omega_2)
\end{equation}
in the homogeneous addendum $H^{p_1+p_2}(L;\Q) \otimes H^{\rk L}(M(\A_L);\Q)$ for every connected components $L$ of $W_1 \cap W_2$.

Let $\sigma_L : H^{\rk W}(M(\A_W))\rightarrow H^{\rk L}(M(\A_L))$ be the composite function
\[H^{\rk W}(M(\A_W))\xrightarrow{\sigma} \bigoplus_{\substack{L'<W \\ \rk L'= \rk W -1}} H^{\rk L'}(M(\A_{L'})) \twoheadrightarrow H^{\rk L}(M(\A_L))\]
between the natural differential in cohomology of the hyperplane arrangement $\A_W$ defined in \cite[2.3]{OS80} and the canonical projection. 
\noindent
Let $G_L: H^p(W)(-\rk W) \rightarrow H^{p+2}(L)(-\rk L)$ be the Gysin morphism for the inclusion $W \hookrightarrow L$.
The differential $\dd$ of bi-degree $(2,-1)$ is defined on the homogeneous element $x\otimes \omega \in H^p(W)\otimes H^{\rk W}(M(\A_W))$ as 
\begin{equation} \label{eq:def_dd}
\dd (x\otimes \omega) \defeq \sum_{\substack{L<W \\ \rk L= \rk W -1}} G_L(x)\otimes \sigma_L(\omega).
\end{equation}

Notice that $E_2^{p,q}(M(\A))$ has weight $p+2q$ and that the differential graded algebra (dga) $(E_2^{\bigcdot,\bigcdot},\dd)$ is the second page of the Leray spectral sequence for the inclusion $M(\A) \hookrightarrow E^n$ of the constant sheaf $\underline{\Q}$.

The cohomology of the variety $M(\A)$ was studied by Christin Bibby and by Clémente Dupont.
The following result was proven, independently, in \cite[Theorem 3.3]{Bibby16} and in \cite[Theorem 1.2]{Dupont15}.
\begin{thm} \label{thm:Bibby-Dup}
The graded cohomology algebra $H^\bigcdot(M(\A);\Q)$ is isomorphic to the cohomology of the complex $(E_2^{\textnormal{tot}}(M(\A)),\dd)$ as graded algebra endowed with mixed Hodge structure over $\Q$.
\end{thm}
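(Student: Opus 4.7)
The plan is to realize $(E_2^{\bigcdot,\bigcdot}(M(\A)),\dd)$ as the second page of the Leray spectral sequence for the open inclusion $j\colon M(\A)\hookrightarrow E^n$ with coefficients in $\underline{\Q}$, and then to argue that this spectral sequence degenerates at $E_3$ by a weight argument in the category of mixed Hodge structures.

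The first step is to compute the higher direct images $R^q j_*\underline{\Q}$. Since each $D\in\A$ is smooth and $\A$ has local normal-crossing combinatorics prescribed by the tangent hyperplane arrangement $\A_W$ at any point $p$ of a layer $W$, the stalk of $R^q j_*\underline{\Q}$ at $p$ is $H^q(M(\A_W);\Q)$, and this identification is locally constant along $W$. A standard local-to-global argument (or the Leray-Hirsch decomposition used by Bibby and Dupont) gives a canonical splitting
\[
R^q j_*\underline{\Q}\;\cong\;\bigoplus_{\rk W=q} (\iota_W)_*\,\underline{H^q(M(\A_W);\Q)},
\]
where $\iota_W\colon W\hookrightarrow E^n$ is the inclusion. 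Taking $H^p(E^n,-)$ then yields $E_2^{p,q}=\bigoplus_{\rk W=q} H^p(W;\Q)\otimes H^q(M(\A_W);\Q)$, matching the formula in \cref{subsect:model}.

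The second step is to identify the multiplicative and differential structure. The cup product on the Leray spectral sequence is induced by the cup product on $M(\A)$, and under the decomposition above it factors through $W_1\cap W_2\supset L$: on $H^\bullet(W_i)$ one applies the pullback $\gamma_i^*$ followed by cup on $L$, while on the arrangement factors one applies $\eta_i^*$ followed by cup; the sign $(-1)^{p_2\rk W_1}$ appears from commuting a degree-$p_2$ class past a degree-$\rk W_1$ class, which reproduces \eqref{eq:prod_astratto}. The differential $d_2$ is the edge map of the spectral sequence; its two components are precisely the Gysin morphism $G_L$ (reflecting how cohomology of $W$ pushes forward under $W\hookrightarrow L$) and the Orlik–Solomon boundary $\sigma_L$ (reflecting the connecting map in the long exact sequence of a hyperplane arrangement complement in $\A_W$ obtained by deleting one hyperplane). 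Checking that this matches \eqref{eq:def_dd} is a local computation on transverse slices, reducing to the classical hyperplane case.

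The decisive step is degeneration at $E_3$ and comparison of mixed Hodge structures. The inclusion $j$ is the complement of a normal crossing divisor after blow-up, so the Leray spectral sequence is a spectral sequence of mixed Hodge structures. Since $H^p(W;\Q)$ is pure of weight $p$ and $H^q(M(\A_W);\Q)(-\rk W)$ is pure of weight $2q$, the summand $E_2^{p,q}$ is pure of weight $p+2q$. The differential $d_r$ has bi-degree $(r,1-r)$ and hence would shift weight by $r+2(1-r)-0 = 2-r$; for $r\geq 3$ this shift is nonzero, so $d_r$ must vanish on morphisms of mixed Hodge structures. Consequently $E_3=E_\infty$, and the resulting bi-graded algebra computes the graded algebra $\gr_W^{\bigcdot}H^\bigcdot(M(\A);\Q)$. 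Purity of each weight piece forces the associated-graded to equal the whole, yielding the desired isomorphism of graded algebras carrying the mixed Hodge structure. The main obstacle is the first step, namely producing the canonical decomposition of $R^q j_*\underline{\Q}$ and verifying that the $d_2$ obtained from the spectral sequence genuinely agrees with the Gysin–OS differential of \eqref{eq:def_dd}; everything else is either formal or a weight-counting argument.
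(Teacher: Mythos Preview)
The paper does not prove this theorem at all: it is stated as a result of Bibby and Dupont and cited to \cite[Theorem~3.3]{Bibby16} and \cite[Theorem~1.2]{Dupont15}, and is then used as a black box in the proof of \Cref{thm:comb_depend}. So there is no ``paper's own proof'' to compare against.

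Your sketch is, in outline, the argument of those references: identify $E_2^{\bigcdot,\bigcdot}$ with the second page of the Leray spectral sequence for $j\colon M(\A)\hookrightarrow E^n$, check that the cup product and $d_2$ match \eqref{eq:prod_astratto} and \eqref{eq:def_dd}, and then use purity of weights to force $d_r=0$ for $r\geq 3$. The weight shift computation $2-r$ is correct. Two places deserve a bit more care if you want this to stand on its own. First, the canonical splitting of $R^q j_*\underline{\Q}$ does not follow merely from knowing the stalks; one needs an actual decomposition as a sheaf, which in the cited papers comes from the combinatorics of the stratification (or, in Dupont's approach, from a global model for the logarithmic complex). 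Second, the passage from $E_3=E_\infty$ to an isomorphism of \emph{algebras} (not just of graded vector spaces) needs the Leray filtration on $H^\bigcdot(M(\A))$ to split multiplicatively; this holds because the Leray filtration coincides with the weight filtration and the category of mixed Hodge structures is semisimple in the relevant sense, but it should be said explicitly rather than folded into ``purity forces the associated-graded to equal the whole.''
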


\subsection{The combinatorial model}
The model presented in this section coincides with the one presented in \cite[Theorem 1.2]{Dupont15}, specialized to the case of hypersurface arrangements in $E^n$.

Fix a total order on the divisors, numbering them such that $D_i < D_j$ if and only if $i<j$.
We say $I\subseteq [l] := \{1, \dots, l\}$ is an \textit{independent set} if the intersection $\bigcap_{i \in I} D_i$ has codimension equal to $|I|$ and we say that a layer $W\in \S$ and an independent set $I$ are \textit{associated} if $W$ is a connected component of $\bigcap_{i \in I} D_i$.
As usual, a subset $C\subset [l]$ is a \textit{circuit} if it is not independent and, for all $i\in C$, $C\setminus \{i\}$ is independent.

Let $F$ be the set
\[ F\stackrel{\textnormal{def}}{=}\{(W,I) \in \S \times \mathcal{P} ([l]) \mid W \textnormal{ and } I \textnormal{ are associated, } I \neq \emptyset\}.\]

Define the external algebra $\Lambda$ over 
$\Q$ with generators
\[\{x_i,y_i,\omega_{W,I}\}_{1 \leq i \leq l, \, (W,I) \in F}.\]
We set the degree of each $x_i$ and $y_i$ equal to $(1,0)$ and the degree of $\omega_{W,I}$ equal to $(0,|I|)$ (notice that $|I|=\rk W$).
Define the differential $\partial: \Lambda \rightarrow \Lambda$ of bi-degree $(2,-1)$ on generators as follows: for $i=1, \dots, l$ we impose $\partial x_i=0$ and $\partial y_i=0$ and for the other generators:
\[ \partial \omega_{W,I} \stackrel{\textnormal{def}}{=} \sum_{\substack{(L,J)\in F \\ J\subset I \, \, L<W \\ \rk L= \rk W -1}} (-1)^{\tau(j)} x_j y_j \omega_{L,J} \]
where $\{j\}= I \setminus J$ and $\tau(j)=|\{k \in J \mid k <j\}|$.

Let $N$ be a defining matrix for $\A$. 
We define the dga $A^{\bigcdot,\bigcdot}(\A,N)$ as the quotient of $\Lambda$ by the following relations:
\begin{enumerate}
\item \label{rel_1} For every $i \in I$
\begin{equation}\label{eq:x_i_omega_i}
x_i\omega_{W,I}=0 \quad \textnormal{and} \quad y_i\omega_{W,I}=0
\end{equation}

\item \label{rel_2} For each two pairs $(W_1,I_1)$ and $(W_2,I_2)$ in $F$, let $L$ be a connected components of $W_1 \bigcap W_2$.
If $(L,I_1 \cup I_2) \not \in F$, consider the relation
\begin{equation}\label{eq:prod}
\omega_{W_1,I_1}\omega_{W_2,I_2}=0
\end{equation}
otherwise the relation
\begin{equation}\tag{\ref{eq:prod}'}\label{eq:prod_2}
\omega_{W_1,I_1}\omega_{W_2,I_2} = (-1)^\sigma \sum_{L\in \pi_0 (W_1 \cap W_2)} \omega_{L,I_1 \cup I_2} 
\end{equation} 
where $\sigma$ is the sign of the permutation that reorders $I_1 \cup I_2$ and $L$ ranges over the connected components of $W_1 \bigcap W_2$.
\item \label{rel_3} For every circuit $C$ and for every connected component $W$ of $\bigcap_{i \in C} D_i$ consider the relation
\begin{equation}\label{eq:circuit}
\sum_{i \in C} (-1)^{C_{\leq i}} \omega_{W,C \setminus \{i\}}=0
\end{equation}
where $C_{\leq i} = | \{ c \in C \mid c<i\} | $.
\item \label{rel_4} The following hold:
\begin{equation}\label{eq:curva_ell}
\sum_{i=1}^l k_i x_i=0 \quad \textnormal{and} \quad \sum_{i=1}^l k_i y_i=0
\end{equation}
for every vector $\underline{k} \in \ker{N}\subset \Q^l$ .
\end{enumerate}

\begin{thm}\label{thm:comb_depend}
The structure of the rational cohomology ring of $M(\A) \subseteq E^n$ depends only on the graded poset $\S(\A)$ (and on $n$).
\end{thm}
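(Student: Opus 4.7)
The plan is to combine \cref{thm:Bibby-Dup} with the combinatorial model $A^{\bigcdot,\bigcdot}(\A, N)$ of this section and to check that every ingredient of that model is intrinsic to $\S$ and $n$. By \cref{thm:Bibby-Dup} together with the identification of $A^{\bigcdot,\bigcdot}(\A, N)$ with the Bibby--Dupont model, we obtain an isomorphism of graded $\Q$-algebras
\[H^\bigcdot(M(\A);\Q) \cong H^\bigcdot(A^{\bigcdot,\bigcdot}(\A, N),\partial),\]
so the problem reduces to showing that the dga on the right, up to isomorphism, is determined by $\S(\A)$ and $n$.

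Next I would verify that the combinatorial scaffolding is intrinsic to $\S$. The cardinality $l$ equals the number of rank-one elements of $\S$ (divisors being connected by assumption), and the matroid on $[l]$ is recovered from $\S$ via the rule that $I$ is independent precisely when some layer has rank $|I|$ and lies in $\bigcap_{i\in I}D_i$. Consequently the circuits (with their canonical sign convention), closures, the set $F$, and the connected components of intersections are all intrinsic to the poset. Thus relations (\ref{rel_1}), (\ref{rel_2}), and (\ref{rel_3}), together with the bi-grading and the combinatorial shape of the differential $\partial$ in (\ref{eq:def_dd}), are the same in $A^{\bigcdot,\bigcdot}(\A_1, N_1)$ and $A^{\bigcdot,\bigcdot}(\A_2, N_2)$ whenever $\S(\A_1) = \S(\A_2)$.

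The main obstacle is relation (\ref{rel_4}), which refers to the subspace $\ker N \subset \Q^l$ rather than to a purely combinatorial invariant. A priori, two defining matrices $N_1, N_2$ for arrangements with the same poset may have different kernels. The key step is a rigidity statement: the data encoded in $\S$ — the matroid on $[l]$ together with the $\gcd$'s of the appropriate minors of $N$ that record the connected-component counts of intersections — forces $N_1$ and $N_2$ to agree up to a signed permutation of columns and a $\gl(n,\Z)$ change of basis on the rows. The row change leaves $\ker N$, and hence the algebra, unchanged; the signed column permutation is implemented inside the dga by the substitution $x_i \mapsto \varepsilon_i x_{\pi(i)}$, $y_i \mapsto \varepsilon_i y_{\pi(i)}$, $\omega_{W, I} \mapsto \omega_{W, \pi^{-1}(I)}$, under which all four families of relations are preserved and the differential is unaffected, since $x_j y_j$ is invariant under sign changes $\varepsilon_j \mapsto \pm\varepsilon_j$. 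This yields a dga isomorphism $A^{\bigcdot,\bigcdot}(\A_1, N_1) \xrightarrow{\sim} A^{\bigcdot,\bigcdot}(\A_2, N_2)$, and hence the desired isomorphism of cohomology rings. The rigidity statement about $N_1, N_2$ is the portion of the argument that I expect to require the most work, since it is the step where the combinatorics of $\S$ must be shown to determine the linear algebra of the defining matrix up to the appropriate equivalence.
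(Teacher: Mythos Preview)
Your outline matches the paper's proof almost exactly: reduce via \cref{thm:Bibby-Dup} and \cref{lemma:iso_dga} to the combinatorial model, observe that relations \eqref{eq:x_i_omega_i}--\eqref{eq:circuit} and the differential $\partial$ are intrinsic to the poset, and handle relation \eqref{eq:curva_ell} by a rigidity statement for defining matrices (the paper packages this as \cref{lemma:orbit_space}, citing representation theory of arithmetic matroids). Your explicit substitution for the signed column permutation is a minor variation on the paper, which instead routes through $E_2^{\bigcdot,\bigcdot}(M(\A))$ by noting that $N$ and $N\cdot h$ define the same arrangement.

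Two small corrections are needed. First, the rigidity you state holds only with $\gl(n,\Q)$ acting on the rows, not $\gl(n,\Z)$: the example immediately following the proof in the paper exhibits two arrangements with identical posets whose defining matrices are related by $\left(\begin{smallmatrix}1 & 1/5\\ 0 & 1\end{smallmatrix}\right)\in\gl(2,\Q)\setminus\gl(2,\Z)$ on the rows (after a signed column permutation). This does not damage your argument, since $\ker N$ is invariant under any invertible rational row operation, but the claim as written is false. Second, the identification $A^{\bigcdot,\bigcdot}(\A,N)\simeq E_2^{\bigcdot,\bigcdot}(M(\A))$ of \cref{lemma:iso_dga} requires $\A$ to be essential; you have not addressed the non-essential case. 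The paper handles it at the end by writing $M(\A)\simeq M(\A')\times E^{n-r}$ for an essential arrangement $\A'$ in $E^r$, where $r=\rk\S(\A)$, and applying K\"unneth; you should add this reduction.
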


An arrangement $\A$ is \textit{essential} if a defining matrix (or, equivalently, any defining matrix) has rank $n$.

\begin{lemma} \label{lemma:iso_dga}
Let $\A$ be an essential elliptic arrangement and $N$ be a defining matrix for $\A$.
The differential graded algebras $E_2^{\bigcdot,\bigcdot}(M(\A))$ and $A^{\bigcdot,\bigcdot}(\A,N)$ are isomorphic.
\end{lemma}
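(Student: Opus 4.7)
The plan is to exhibit an explicit dga isomorphism $\phi: A^{\bigcdot,\bigcdot}(\A,N) \to E_2^{\bigcdot,\bigcdot}(M(\A))$ defined on generators, then verify compatibility with the four families of relations and with the differential. On the bi-degree $(1,0)$ generators I set $\phi(x_i) = N_i^*(x)$ and $\phi(y_i) = N_i^*(y)$, where $N_i: E^n \to E$ is the homomorphism associated to the $i$-th column of $N$ and $x,y$ are standard generators of $H^1(E;\Q)$; these classes land in $H^1(E^n) = E_2^{1,0}$. On a bi-degree $(0,|I|)$ generator $\omega_{W,I}$, I set $\phi(\omega_{W,I}) = 1 \otimes e_I$ inside the $W$-summand $H^0(W) \otimes H^{\rk W}(M(\A_W))$, where $e_I = e_{i_1} \smile \cdots \smile e_{i_k}$ is the Orlik--Solomon product associated to the ordered independent set $I = \{i_1 < \cdots < i_k\}$.

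Relation \eqref{eq:curva_ell} is automatic: $\sum k_i N_i^* x = \left(\sum k_i N_i\right)^* x = 0$ whenever $\underline{k} \in \ker N$. Relation \eqref{eq:x_i_omega_i} holds because if $i \in I$ then $W \subset D_i$ and the composition $W \hookrightarrow E^n \xrightarrow{N_i} E$ is constant, so $\gamma_W^*(N_i^* x) = 0$. The product rule \eqref{eq:prod_astratto} restricted to the $\omega$-parts reproduces \eqref{eq:prod} and \eqref{eq:prod_2}: the cup product $\eta_1^* e_{I_1} \smile \eta_2^* e_{I_2}$ equals $\pm e_{I_1 \cup I_2}$ summed over the connected components of $W_1 \cap W_2$ when $I_1 \cup I_2$ is independent and vanishes otherwise, the reordering sign being $(-1)^\sigma$. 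The circuit relations \eqref{eq:circuit} are exactly the Orlik--Solomon relations inside each $H^\bigcdot(M(\A_W))$.

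For the differential, $\partial x_i = \partial y_i = 0$ is matched by $\dd = 0$ on $E_2^{1,0}$. For $\omega_{W,I}$, each codimension-one sublayer $L < W$ is associated with some $J = I \setminus \{j\}$, and $W$ is cut out of $L$ by the single divisor $D_j$, so the Gysin class $G_L(1) \in H^2(L)$ is the restriction to $L$ of the Poincaré dual of $D_j$ in $E^n$, namely $N_j^*(x \smile y) = x_j y_j$. Combined with the standard Orlik--Solomon boundary $\sigma_L(e_I) = (-1)^{\tau(j)} e_{I \setminus \{j\}}$, this exactly recovers \eqref{eq:def_dd}.

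Finally, $\phi$ is an isomorphism because both algebras split as direct sums indexed by $(W,I) \in F$, together with the $q=0$ summand for $W = E^n$, and in each summand the relations \eqref{eq:curva_ell}--\eqref{eq:circuit} cut out precisely $H^\bigcdot(W) \otimes H^{\rk W}(M(\A_W))$: the $(x,y)$-factor reduces to $H^\bigcdot(W)$ using essentiality of $\A$ (so that $\operatorname{rk} N = n$) and the surjectivity of $\gamma_W^*: H^1(E^n) \to H^1(W)$ with kernel spanned by $\{N_i^* x, N_i^* y\}_{i \in I}$, while the $\omega$-factor recovers the top Orlik--Solomon space of $\A_W$ via \eqref{eq:prod_2} and \eqref{eq:circuit}, in line with Dupont's identification in \cite{Dupont15}. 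The main technical obstacle throughout is the sign bookkeeping required to align the combinatorial signs $(-1)^\sigma$, $(-1)^{\tau(j)}$, and $(-1)^{C_{\leq i}}$ with those arising from cup products, pullbacks, and Gysin maps on the $E_2$-side.
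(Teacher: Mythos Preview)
Your construction of $\phi$ on generators and the verification that relations \eqref{eq:x_i_omega_i}--\eqref{eq:curva_ell} and the differential are respected match the paper's proof essentially line for line. The divergence is in the final paragraph, where you argue the isomorphism.

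The paper proceeds by first checking surjectivity (showing $E_2^{1,0}$ and $E_2^{0,q}$ lie in the image, the former requiring essentiality) and then proving injectivity by a dimension count: it exhibits a spanning set of $A^{\bigcdot,\bigcdot}(\A,N)$ indexed by pairs $(W,I)$ with $I$ a non-broken circuit together with monomials in $\{x_j,y_j\}_{j\in J(W)}$ for a chosen $J(W)$, and observes that this set has the same cardinality as a basis of $E_2^{\bigcdot,\bigcdot}$.

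Your argument instead asserts a direct sum decomposition and a summand-by-summand match. Two points need tightening. First, the splitting is over layers $W\in\S$, not over pairs $(W,I)\in F$: the generators $\omega_{W,I}$ with fixed $W$ are linked by the circuit relations \eqref{eq:circuit}, so there is no $(W,I)$-indexed direct sum. Second, and more substantively, your reduction of the $(x,y)$-factor to $H^{\bigcdot}(W)$ is incomplete. Relation \eqref{eq:x_i_omega_i} only gives $x_i\omega_{W,I'}=0$ when $i\in I'$, whereas injectivity of $\phi$ on the $W$-summand requires $x_i\omega_{W,I'}=0$ in $A$ whenever $W\subset D_i$, regardless of whether $i\in I'$. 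This is true, but it is not free: one must use the circuit relation for the unique circuit $C\subset I'\cup\{i\}$ to rewrite $\omega_{W,I'}$ as a combination of $\omega_{W,I''}$ with $i\in I''$, and only then apply \eqref{eq:x_i_omega_i}. Without this step (or an equivalent dimension bound), you have shown $\phi$ is a well-defined surjection but not that $A$ is no larger than $E_2$. The paper's nbc dimension count avoids having to make this argument explicit.
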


\begin{proof}
Fix a basis $\{x,y\}$ of $H^1(E;\Z)$ such that $xy$ generates $H^2(E;\Z)$ and let $N=(n_{i,j})$ be a defining matrix for the arrangement $\A$ (see definition in \ref{subsect:def}).
For each $i=1, \dots, l$, let $\pi_i: E^n \rightarrow E$ be the map
\[(P_1, \dots, P_n) \mapsto \sum_{j=1}^n n_{j,i} P_j\]
thus obviously $\tilde{D}_i=\pi_i^{-1}(O)$.
Consider the morphism $f:\Lambda \rightarrow E_2^{\bigcdot,\bigcdot}(M(\A))$ defined by
\begin{align*}
& f(x_i)= \pi_i^*(x)\otimes 1 & &\in H^1(E^n)\otimes H^0(M(\A_{E^n}))  \\
& f(y_i)= \pi_i^*(y)\otimes 1 & &\in H^1(E^n)\otimes H^0(M(\A_{E^n})) \\
& f(\omega_{W,I}) = 1 \otimes e_I & & \in H^0(W) \otimes H^{\rk W}(M(\A_W))
\end{align*}
where $e_I= e_{I_1}e_{I_2} \cdots e_{I_{|I|}}$ is the product of the generators $e_i \in H^1(M(\A_W))$ for the hyperplane $T_p D_i$ (for a point $p \in W$).
The map $f$ preserves the gradation and sends relations \eqref{eq:x_i_omega_i}-\eqref{eq:curva_ell} to zero.
Indeed:
\begin{enumerate}
\item If $i \in I$, then 
\begin{align*}
f(x_i \omega_{W,I}) &= f(x_i) \cdot f(\omega_{W,I})\\
&= (\pi_i^*(x) \otimes 1)\cdot (1\otimes e_I) \\
&= \gamma^* \pi_i^*(x) \otimes e_I \\
&= (\pi_i \circ \gamma)^*(x) \otimes e_I \in H^0(W) \otimes H^{\rk W}(M(\A_W))
\end{align*}
where $\gamma$ is the inclusion $W \hookrightarrow E^n$.
Since $W \subset D_i$, the composite function $\pi_i \circ \gamma$ is constant and so $f(x_i \omega_{W,I})=0$.

\item Relation \eqref{eq:prod_2} corresponds to the multiplication in $E_2^{\bigcdot,\bigcdot}(M(\A))$ given by equation \eqref{eq:prod_astratto}.
If $(L, I_1 \cup I_2)$ does not belong to $F$, then $\rk L< \rk W_1 + \rk W_2$.
In this case the arrangement $\A_L$ -- which has rank equal to $\rk L$ -- has no cohomology in degree $\rk W_1 + \rk W_2$, so the product $\eta_1^* e_{I_1} \smile \eta_2^* e_{I_2}$ in equation \eqref{eq:prod_astratto} is zero.
Hence $f$ maps \eqref{eq:prod} to zero.

\item The map $f$ sends relation \eqref{eq:circuit} to the Orlik-Solomon relation for the same circuit $C$ in the cohomology of the hyperplane arrangement $\A_W$.

\item Suppose that $\underline{k}=(k_1, \dots, k_l) \in \ker N$, then $N\underline{k}=0$ and so $\sum_{i=1}^l k_i \pi_i$ is the constant map. We have
\begin{align*}
f \left( \sum_{i=1}^l k_ix_i \right) &= \sum_{i=1}^l k_i f(x_i) = \sum_{i=1}^l k_i \pi_i^*(x) \otimes 1 \\
&= \left(\sum_{i=1}^l k_i \pi_i \right)^*(x) \otimes 1=0.
\end{align*}
The same proof holds for the variables $y_i$.
\end{enumerate}

Now we show the equality $f\circ \partial = \dd \circ f$ on the generators, where $\dd$ is the differential defined by equation \eqref{eq:def_dd}.
This is easy to check on the generators $x_i$ and $y_i$ since $f(\partial x_i)=0$ and $\dd (\pi_i^*x \otimes 1)=0$ (there is no layer of rank $-1$).
Then fix a generator $\omega_{W,I}$ and consider all pairs $(L,J)\in N$ such that $L<W$, $J \subset I$ and $\rk L= \rk W -1$.
Recall the Gysin map $G_L$ and the map in cohomology $\sigma_L$ introduced in \Cref{subsect:model}; the equality
\[ (-1)^{\tau(j)} \pi_j^* (xy)\otimes e_J = G_L(1) \otimes \sigma_L(e_I)\]
holds in $H^2(L)\otimes H^{\rk L}(M(\A_L))$ (where $\{j\}=I \setminus J$) since the equation of $W$ in $L$ is given by $\pi_j$ and $\sigma_L(e_I)=(-1)^{\tau(j)} e_J$.
The equality $f\circ \partial = \dd \circ f$ follows from
\begin{align*}
f(\partial \omega_{W,I}) &= f \left( \sum_{(L,J)} (-1)^{\tau(j)} x_jy_j \omega_{L,J} \right) \\
&= \sum_{(L,J)} (-1)^{\tau(j)} f(x_j) f(y_j) f(\omega_{L,J}) \\
&= \sum_{(L,J)} (-1)^{\tau(j)} (\pi_j^*(xy) \otimes e_J)_L \\
&= \sum_{(L,J)} G_L(1) \otimes \sigma_L(e_I)\\
&= \dd (1\otimes e_I) = \dd (f(\omega_{W,I}))
\end{align*}
where in the last equality we use the definition (eq. \eqref{eq:def_dd}) of the differential $\dd$.

Since the algebra $E_2^{\bigcdot,\bigcdot}(M(\A))$ is generated in degrees $(1,0)$ and $(0,q)$ for every $q>0$, in order to prove the surjectivity of $f$, we show that $E_2^{0,q}(M(\A))$ and $E_2^{1,0}(M(\A))$ are contained in $\im f$.
The inclusion $E_2^{0,q}(M(\A)) \subset \im f$ always holds since $H^0(W) \otimes H^{\rk W}(M(\A_W))$ is generated as vector space by $1 \otimes e_I$, where $I$ is an independent set of cardinality equal to $\rk W$.
The second inclusion $E_2^{1,0}(M(\A)) \subset \im f$ holds if and only if the arrangement $\A$ is essential, thus the surjectivity of $f$ follows by hypothesis.

Let $\varphi: A^{\bigcdot,\bigcdot}(\A,N) \rightarrow E_2^{\bigcdot,\bigcdot}(M(\A))$ be the map induced by $f$.
It is a well defined morphism of dga, preserves the bi-gradation and it is surjective by the previous discussion.
We prove injectivity using dimensional argument.
Consider the set $F' \subset F$ of pairs $(W,I)$ such that $I$ is a non-broken circuit for the arrangement $\A_W$ (see \cite[Definition 3.35 and Definition 3.36]{OT92}).
For each layer $W$ we choose a set $J(W)\subset [l]$ such that $\{ \pi_j^*(x), \pi_j^*(y)\}_{j\in J(W)}$ is a basis of $H^1(W;\Q)$, this is always possible since $\A$ is essential.
A basis of $A^{p,q}$ is given by $\prod_{k=1}^p z_k \omega_{W,I}$ where $(W,I)\in F'$ and $z_k$ are distinct elements of the set $\{ x_j, y_j\}_{j\in J(W)}$.
Thus the dimension of $A^{\bigcdot,\bigcdot}(\A,N)$ is
\[ \sum_{(W,I)\in F'}2^{\rk W}\]
and is equal to the dimension of $E_2^{\bigcdot,\bigcdot}(M(\A))$.
\end{proof}

Consider the group $G=\operatorname{GL}(n;\Q) \times (\Z/2\Z \wr \mathfrak{S} _l)$ (where $ \Z/2\Z \wr \mathfrak{S} _l$ is the wreath product of $\Z/2\Z$ by $\mathfrak{S}_l$) and its natural action on $\operatorname{M}(n,l;\Q)$: this action is given by the left multiplication of $\operatorname{GL}(n;\Q)$, by the sign reverse of the columns for $(\Z/2\Z)^l$ and by permuting the columns for $\mathfrak{S}_l$.
%

\begin{lemma}\label{lemma:orbit_space}
For all graded posets $\S$ the set
\[\{ N \in \operatorname{M}(n,l;\Z) \mid \exists \, \A \textnormal{ s.t. } \S \simeq \S(\A) \textnormal{ and } N \textnormal{ defines } \A \}\]
is contained in a unique $G$-orbit of $\operatorname{M}(n,l;\Q)$.
\end{lemma}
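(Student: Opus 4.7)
The plan is to show that the graded poset $\S=\S(\A)$ remembers enough combinatorial data to pin down the defining matrix $N$ up to the action of $G$. The argument proceeds in three steps: (i) extract the matroid and the intersection multiplicities from $\S$, (ii) normalize a fixed basis of the matroid using the $\operatorname{GL}(n;\Q)$-factor of $G$, and (iii) fix the remaining coefficients up to the signed-permutation factor $\Z/2\Z\wr\mathfrak{S}_l$.

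For step (i), a subset $I\subseteq [l]$ is independent exactly when $\S$ contains an element of rank $|I|$ that lies below all the rank-one layers corresponding to the divisors $D_i$, $i\in I$. Hence the matroid $M(N)$ of the columns of $N$ is determined by $\S$. Moreover, for every independent $I$ the multiplicity $m(I):=|\pi_0(\bigcap_{i\in I}D_i)|$ equals the number of rank-$|I|$ layers associated to $I$ in the sense of $F$, and this number is also read off from $\S$. In particular, for every basis $B$ of $M(N)$ the value $m(B)=|\det N_B|^2$ is an invariant of $\S$.

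For step (ii), fix a basis $B_0$ and multiply on the left by $N_{B_0}^{-1}\in\operatorname{GL}(n;\Q)$; we may then assume $N_{B_0}=\id_n$. By matroid theory, for each $j\notin B_0$ the column $N_j$ is supported exactly on the fundamental circuit $C_j(B_0)\setminus\{j\}$, so one writes $N_j=\sum_{i\in C_j(B_0)\setminus\{j\}}\lambda_{ij}e_i$ with $\lambda_{ij}\neq 0$. Since $m((B_0\setminus\{i\})\cup\{j\})=\lambda_{ij}^2$, the absolute values $|\lambda_{ij}|$ are dictated by $\S$.

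For step (iii), the residual freedom in $G$ that preserves the normalization $N_{B_0}=\id_n$ acts on the $\lambda_{ij}$'s by $\lambda_{ij}\mapsto\epsilon_i\eta_j\lambda_{ij}$, where $\epsilon_i$ comes from sign-reversing the column $i\in B_0$ (followed by the compensating diagonal left-action restoring $N_{B_0}=\id_n$) and $\eta_j$ from sign-reversing the column $j\notin B_0$. To show this action is transitive on the set of sign-patterns compatible with $\S$, I would compare $\det N_B$ for bases $B$ obtained from $B_0$ by several exchanges: Laplace expansion expresses $\det N_B$ as a signed sum of products of $\lambda_{ij}$'s, and $m(B)=|\det N_B|^2$ then gives a quadratic relation that, together with the sign action above, determines the signs of the $\lambda_{ij}$'s uniquely. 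The main obstacle is precisely this step: one must verify inductively (say on the non-basis columns, adding one at a time) that the constraints from the basis multiplicities are always consistent with the group $(\Z/2\Z)^n\times(\Z/2\Z)^{l-n}$ of admissible sign flips, and that no further discrete invariant of $\S$ is needed.
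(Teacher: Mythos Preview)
The paper does not prove this lemma itself: it simply records that the statement follows from the representation theory of arithmetic matroids and cites \cite[Corollaries~3.8 and~3.9]{Pagaria17}. Your outline is essentially the skeleton of such a proof, and steps~(i) and~(ii) are correct: the poset $\S$ determines the underlying matroid together with the multiplicity $m(I)$ of every independent set (for elliptic arrangements $m(B)=|\det N_B|^{2}$ on bases), and after the $\operatorname{GL}(n;\Q)$-normalisation the single-exchange multiplicities $m\bigl((B_{0}\setminus\{i\})\cup\{j\}\bigr)=\lambda_{ij}^{2}$ recover every $|\lambda_{ij}|$.

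The genuine gap is exactly where you place it, and it is not a detail but the entire content of the lemma. Knowing $m(B)=(\det N_{B})^{2}$ for every basis $B$ yields, via Laplace expansion, a family of quadratic constraints on the signs of the $\lambda_{ij}$; what must be shown is that any two sign patterns satisfying all of these constraints lie in the same $(\Z/2\Z)^{n}\times(\Z/2\Z)^{l-n}$-orbit. This is precisely a uniqueness-of-orientation statement for the arithmetic matroid attached to $\S$: one needs the values $\sqrt{m(B)}$ to admit a consistent choice of signs satisfying the three-term Grassmann--Pl\"ucker relations, and any two such choices to differ by a reorientation. Your proposed induction on the non-basis columns is the right shape, but carrying it out---checking that no sign obstruction appears at any stage---is exactly what the cited corollaries establish, and it is not a one-line verification. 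As written, step~(iii) is a plan rather than a proof, so the proposal stops at the decisive point.
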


\Cref{lemma:orbit_space} follows from representations theory of arithmetic matroids (see, \cite[Corollary 3.8 and 3.9]{Pagaria17}).

\begin{proof}[Proof of \Cref{thm:comb_depend}]
Suppose $\A$ is an essential elliptic arrangement. 
By \Cref{thm:Bibby-Dup} and \Cref{lemma:iso_dga}, we need to show that the dga $A^{\bigcdot,\bigcdot}(\A,N)$ depends only on $\S(\A)$.
The construction of $A^{\bigcdot,\bigcdot}(\A,N)$ 
depends on $\S(\A)$ and on $\ker N$ (by relation \ref{rel_4}).

Let $N$ and $N'$ in $\operatorname{M}(n,l;\Q)$ be two defining matrices for two elliptic arrangements $\A$ and $\A'$ with the same poset of layers $\S=\S (\A)= \S(\A')$.
By \Cref{lemma:orbit_space}, there exist $g \in \operatorname{GL}(n;\Q)$ and $h \in \Z/2\Z \wr \mathfrak{S} _l$ such that
\[N'= g \cdot N \cdot h \]
Since $N$ and $N\cdot h$ define the same elliptic arrangement $\A$ in $E^n$, then
\[ A^{\bigcdot,\bigcdot}(\A,N) \simeq E^{\bigcdot,\bigcdot}(M(\A))\simeq A^{\bigcdot,\bigcdot}(\A,N \cdot h).\]
Notice that $\ker g \cdot N \cdot h= \ker N \cdot h$, so the two algebras $A^{\bigcdot,\bigcdot}(\A,N \cdot h)$ and $A^{\bigcdot,\bigcdot}(\A', g \cdot N \cdot h)$ are equal.

Let now $\A$ be an elliptic arrangement: then there exists an essential elliptic arrangement $\A'$ in $E^r$ such that
\[M(\A)\simeq M(\A') \times E^{n-r},\]
where $r$ is the rank of the poset $\S(\A)$.
Finally, we conclude that the cohomology algebra $H^\bigcdot(M(\A);\Q)$ depends only on $\S(\A)$ and $n$.
\end{proof}

\begin{example}
Let $\A$ be the arrangement in $E^2$ with divisors:
\begin{align*}
& P_1=O \\
& P_1 + 5 P_2 = O \\
& 2 P_1 + 5 P_2=O
\end{align*}
and $\A'$ defined by the divisors:
\begin{align*}
& P_1=O \\
& 2P_1 + 5 P_2 = O \\
& 3 P_1 + 5 P_2=O.
\end{align*}
An easy inspection shows that $\S(\A) \simeq \S(\A')$.
Defining matrix for $\A$ and $\A'$ are, for instance, $N=\left( \begin{smallmatrix}
1 & 1 & 2 \\
0 & 5 & 5
\end{smallmatrix} \right)$
and $N'=\left( \begin{smallmatrix}
2 & 1 & -3 \\
5 & 0 & -5
\end{smallmatrix} \right)$;
however it is more convenient to choose $N''=\left( \begin{smallmatrix}
1 & 2 & 3 \\
0 & 5 & 5
\end{smallmatrix} \right)= N' \cdot h$ as defining matrix for $A'$, where 
\[h=((1,1,-1), (1,2)) \in \Z/2\Z \wr \mathfrak{S}_3.\]
We use the multiplicative notation for $\Z/2\Z=\{1,-1\}$.

Notice that $\ker N \neq \ker N'$, but fortunately, $\ker N = \ker N''= \Q v$, where $v= \left( \begin{smallmatrix}
1 \\ 1 \\ -1
\end{smallmatrix} \right) \in \Q^3$, hence
\[ A^{\bigcdot,\bigcdot}(\A,N)= A^{\bigcdot,\bigcdot}(\A',N'' ) \simeq A^{\bigcdot,\bigcdot}(\A',N') .\]
Consider $G=\left( \begin{smallmatrix}
1 & \frac{1}{5} \\
0 & 1 
\end{smallmatrix} \right)$, then 
\[\left( \begin{matrix}
1 & 2 & 3 \\
0 & 5 & 5
\end{matrix} \right) = \left( \begin{matrix}
1 & \frac{1}{5} \\
0 & 1 
\end{matrix} \right)
\left( \begin{matrix}
1 & 1 & 2 \\
0 & 5 & 5
\end{matrix} \right).\]
Since $G$ does not belong to $\operatorname{GL}(2;\Z)$, the topological space $M(\A)$ and $M(\A')$ are not in general homeomorphic.
However, both of them are a Galois covering of the same elliptic arrangement, with Galois group $\Z/5\Z \times \Z/5\Z$.
\end{example}

\subsection{Vanishing of higher cohomology}

\begin{lemma}\label{lemma:affine_var}
Let $\A$ be an essential elliptic arrangement.
Than the complement $M(\A)$ is an affine variety.
\end{lemma}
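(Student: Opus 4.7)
The plan is to exhibit $M(\A)$ as the preimage of an affine open subset under a finite morphism out of the projective variety $E^n$. Let $N=(n_{j,i})$ be a defining matrix for $\A$ and let $Q_i\in E$ be the translation point of the $i$-th divisor, so that $D_i$ is cut out by $\sum_j n_{j,i}P_j=Q_i$. For each $i$ I would introduce the morphism
\[\tilde\pi_i\colon E^n\to E,\qquad (P_1,\dots,P_n)\mapsto \sum_{j=1}^n n_{j,i}P_j - Q_i,\]
so that $D_i=\tilde\pi_i^{-1}(O)$, and assemble them into $\tilde\pi=(\tilde\pi_1,\dots,\tilde\pi_l)\colon E^n\to E^l$. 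Tautologically one has $M(\A)=\tilde\pi^{-1}\bigl((E\setminus\{O\})^l\bigr)$, and the target is affine because $E\setminus\{O\}$ is the affine Weierstrass curve (equivalently, $\{O\}$ is ample on $E$) and a finite product of affine varieties is affine.

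The heart of the argument is to show that $\tilde\pi$ is finite onto its image. Up to translation on the target $E^l$, the morphism $\tilde\pi$ coincides with the group homomorphism $\pi\colon E^n\to E^l$ attached to $N$. Its differential at the origin is the linear map $\C^n\to\C^l$ given by the matrix $N^T$, using that the differential at $O$ of multiplication-by-$k$ on $E$ is $k\cdot\mathrm{id}$ and that the differential of the addition law is the sum on tangent spaces. The essentiality hypothesis $\mathrm{rk}(N)=n$ makes this differential injective, so $\ker\pi$ is a zero-dimensional closed subgroup scheme of $E^n$, hence a finite set. Thus $\tilde\pi$ is a proper morphism (since $E^n$ is proper) with finite fibres, therefore finite onto the closed subvariety $Y:=\tilde\pi(E^n)\subseteq E^l$.

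To conclude, $U:=Y\cap(E\setminus\{O\})^l$ is a closed subvariety of the affine variety $(E\setminus\{O\})^l$, hence itself affine; and since finite morphisms are affine, the preimage $M(\A)=\tilde\pi^{-1}(U)$ is an affine variety. The only delicate point is the passage from the combinatorial rank condition $\mathrm{rk}(N)=n$ to the geometric finiteness of $\ker\pi$, but this is immediate once the tangent map of $\pi$ is identified with $N^T$; everything else is formal manipulation of finite (equivalently affine) morphisms.
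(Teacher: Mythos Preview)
Your argument is correct and somewhat more streamlined than the paper's. Both proofs ultimately rest on the assembled map $\tilde\pi=(\tilde\pi_1,\dots,\tilde\pi_l)\colon E^n\to E^l$ and the identification $M(\A)=\tilde\pi^{-1}\bigl((E\setminus\{O\})^l\bigr)$, but they diverge in how affineness is extracted from this. The paper first singles out an essential sub-arrangement $\A'\subset\A$ with exactly $n$ divisors, shows $M(\A')$ is affine by exhibiting it as a finite Galois cover of $(E\setminus\{O\})^n$ (after quotienting by the finite group of translations fixing each divisor, which makes $\A'$ unimodular), and then uses a cartesian square to conclude that the open inclusion $M(\A)\hookrightarrow M(\A')$ is an affine morphism. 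You bypass this two-step reduction entirely: the rank condition on $N$ tells you directly, via the tangent map, that $\tilde\pi$ has finite kernel and hence (being proper with finite fibres) is finite onto its closed image $Y\subseteq E^l$; the preimage of the affine open $Y\cap(E\setminus\{O\})^l$ is then affine in one stroke. Your route is shorter and avoids the auxiliary choice of $\A'$ and the quotient construction; the paper's approach, by contrast, makes the affine structure slightly more concrete by producing an explicit affine variety $(E\setminus\{O\})^n$ that a sub-complement finitely covers.
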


Basic general facts about affine morphisms can be found in
\cite[\href{http://stacks.math.columbia.edu/tag/01S5}{Tag 01S5}]{stacks-project}.

\begin{proof}
Let $\A$ be an essential arrangement in $E^n$ and let $\A'$ be an essential sub-arrangement of $\A'$ with exactly $n$-divisors.
If $N\in \operatorname{M}(n,l;\Z)$ is a defining matrix for $\A$ of rank $n$, choose a square minor $N'$ of $N$ of rank $n$.
We can define $\A'$ as the sub-arrangement of $\A$ given by the divisors corresponding to the columns of $N'$.
If the arrangement $\A'$ is not unimodular, consider the subgroup $H$ of $E^n$ of translations that fix all divisors $D \in \A'$.
The quotient $E^n/H$ is isomorphic to $E^n$ and $M(\A')/H$ is the complement of a unimodular essential arrangement that we call $\A''$.
Obviously, $M(\A'')$ is isomorphic to $(E\setminus \{O\})^n$ and thus it is an affine variety.
Since $M(\A') \twoheadrightarrow M(\A'')$ is a Galois covering with Galois group $H$, the morphism is finite and therefore affine (by \cite[\href{http://stacks.math.columbia.edu/tag/01WN}{Tag 01WN}]{stacks-project}), hence $M(\A')$ is an affine variety.

For each $D_i \in \A$ consider the maps $\pi_i: E^n \rightarrow E$ as in \Cref{lemma:iso_dga} and their restrictions to $M(\A') \rightarrow E$.
If $D_i$ belongs to the arrangement $\A'$, then they restrict to $M(\A') \rightarrow E\setminus \{O\}$.
Call $\pi': M(\A')\rightarrow (E\setminus \{O\})^n \times E^{l-n}$ the map $(\pi_1, \dots, \pi_l)$ (suppose that the first $n$ terms are the divisors in $\A'$) and $\pi:M(\A)\rightarrow (E\setminus \{O\})^l$ the map $(\pi_1, \dots, \pi_l)$.
Then the following diagram, where the horizontal maps are the natural open embeddings, is cartesian.
\begin{center}
\begin{tikzpicture}
\node (xa1) at (0,0) {$(E\setminus \{O\})^l$};
\node (xa2) at (4,0) {$(E\setminus \{O\})^n \times E^{l-n}$};
\node (x1) at (0,2) {$ M(\A)$};
\node (x2) at (4,2) {$M(\A')$};
\draw[->] (xa1) to (xa2);
\draw[->] (x1) to node[left]{$\pi$} (xa1);
\draw[->] (x2) to node[right]{$\pi' $} (xa2);
\draw[->] (x1) to  (x2);
\end{tikzpicture}
\end{center}
Since $(E\setminus \{O\})^l$ is affine, the lower map is affine (by \cite[\href{http://stacks.math.columbia.edu/tag/01SI}{Tag 01SI}]{stacks-project}) and so is the top inclusion (by \cite[\href{http://stacks.math.columbia.edu/tag/01SD}{Tag 01SD}]{stacks-project}).
Finally, $M(\A)$ is affine because $M(\A')$ is affine and the inclusion are affine morphisms.
\end{proof}

\begin{thm} \label{thm:vanishing}
Let $\A$ be an elliptic arrangement in $E^n$ of rank $r$ (in particular, if $\A$ is essential then $r=n$).
The cohomology modules $H^i(M(\A);\Z)$ vanish when $i>2n-r$.
\end{thm}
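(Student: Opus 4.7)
The plan is to reduce to the essential case and then combine Lemma~\ref{lemma:affine_var} with the classical homotopy-dimension bound for smooth affine varieties.

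First I would invoke the decomposition already used at the end of the proof of \Cref{thm:comb_depend}: there exists an essential elliptic arrangement $\A'$ in $E^r$ such that
\[M(\A)\simeq M(\A')\times E^{n-r}.\]
Since $H^\bigcdot(E^{n-r};\Z)$ is the exterior algebra on $2(n-r)$ torsion-free generators in degree $1$, the Künneth formula applies without Tor terms and gives
\[H^i(M(\A);\Z)\cong \bigoplus_{p+q=i} H^p(M(\A');\Z)\otimes_\Z H^q(E^{n-r};\Z).\]
Thus the claim $H^i(M(\A);\Z)=0$ for $i>2n-r$ reduces to the essential statement $H^p(M(\A');\Z)=0$ for $p>r$, since $H^q(E^{n-r};\Z)=0$ forces $q\le 2(n-r)$.

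For the essential piece, I would apply \Cref{lemma:affine_var} to conclude that $M(\A')$ is a smooth affine variety of complex dimension $r$. By the Andreotti--Frankel theorem, such a variety has the homotopy type of a CW complex of real dimension at most $r$, so $H^p(M(\A');\Z)=0$ for all $p>r$. Combining this with the Künneth decomposition above immediately yields vanishing of $H^i(M(\A);\Z)$ for $i>r+2(n-r)=2n-r$.

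There is essentially no obstacle beyond citing the two inputs: Lemma~\ref{lemma:affine_var} (affineness) and Andreotti--Frankel (homotopy dimension). The only tiny point of care is that the Künneth formula be used with integer coefficients, which is legitimate here because $H^\bigcdot(E^{n-r};\Z)$ is free abelian; no universal-coefficient correction is needed. The decomposition $M(\A)\simeq M(\A')\times E^{n-r}$ is the standard observation that the kernel of the map to the quotient torus acts by translations on $E^n$ and trivializes the fibration, and it was already invoked in the proof of \Cref{thm:comb_depend}, so I would simply reference it.
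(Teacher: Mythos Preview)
Your proof is correct and follows essentially the same route as the paper: reduce to the essential case via the splitting $M(\A)\simeq M(\A')\times E^{n-r}$, apply K\"unneth, then use \Cref{lemma:affine_var} together with Andreotti--Frankel. The only differences are cosmetic---you are a bit more explicit about why K\"unneth with integer coefficients has no Tor correction, while the paper additionally cites Artin--Grothendieck as an alternative to Andreotti--Frankel.
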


\begin{proof}
As previously observed, there exists an essential arrangement $\A'$ such that $M(\A)\simeq M(\A')\times E^{n-r}$.
By K\" unnet theorem it is thus enough to prove that $H^i(M(\A'))$ vanishes for $i>r$.
By \Cref{lemma:affine_var}, $M(\A')$ is a complex affine variety of real dimension $2r$ and by Andreotti and Frankel theorem (\cite[Theorem 3.1.1]{Lazarsfeld04}) or by Artin and Grothendieck theorem (\cite[Theorem 3.1.12]{Lazarsfeld04}) we get that $H^i(M(\A'))=0$ for every $i>r$.
\end{proof}

As consequence of \Cref{thm:vanishing}, if $\A$ is an essential arrangement in $E^n$, then the first quadrant spectral sequence $E_2^{p,q}(M(\A))$ has non-zero entries only in the triangle $p+2q \leq 2n$.
However, the third page $E_3^{p,q}(M(\A))$ is non-zero only for $p+q\leq n$.

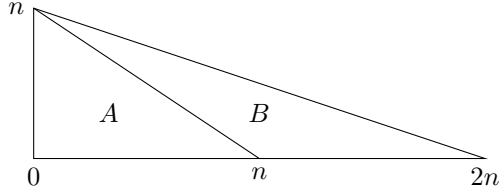
\begin{figure}
\centering
\begin{tikzpicture}
\node (a) at (1,0.6) {$A$};
\node (b) at (3,0.6) {$B$};
\draw (0,0) node[anchor=north]{$0$}
  -- (6,0) node[anchor=north]{$2n$}
  -- (0,2) node[anchor=east]{$n$}
  -- cycle;
\draw (0,2)
  -- (3,0) node[anchor=north]{$n$};
\end{tikzpicture}
\caption{The second page is supported on the triangle $A\cup B$. 
The third page only on triangle $A$.}
\end{figure}
\section{The braid elliptic arrangement}\label{sec:braid_arr}

We study the particular case of the braid elliptic arrangements.
In $E^n$ consider the braid arrangement $A_{n-1}$ defined by the divisors 
\[ D_{i,j}=\{P \in E^n \mid P_i = P_j \}\]
for all $1 \leq i<j\leq n$.
In this case $l=\binom{n}{2}$ and we can order the divisors by following the lexicographical order.

The space $M(A_{n-1})$ coincides with $C(n,E)$, the configuration space of $n$ distinct ordered points over an elliptic curve $E$.

The configuration space of $n$ points in a topological space $X$ has been deeply studied (e.g. \cite{FMacP94}, \cite{Kriz94}), unfortunately the cases of even dimensional compact varieties seem the harder ones.

In the case of the braid arrangement, the model $E_2^{\bigcdot,\bigcdot}(M(A_{n-1}))$ coincides with the Kriz model $E_\bigcdot^\bigcdot$ introduced in \cite{Kriz94} up to shifting the degrees, i.e.
\[ E^{p,q}_2(M(\A_{n-1})) \simeq E^{p+q}_q.\]

This elliptic arrangement is unimodular, but it is not essential, in fact every divisor contains the elliptic curve
\[C=\{P \in E^n \mid P_1=P_2 = \cdots =P_n\}.\]
The group $E$ acts on $E^n$ by translation:
\[Q\cdot (P_1, \dots, P_n) \stackrel{\textnormal{def}}{=} (P_1+Q, P_2+Q, \dots P_n+Q)\]
and this action preserves $M(A_{n-1})$, so that there exists a splitting 
\[ M(A_{n-1})=\faktor{M(A_{n-1})}{E} \times C ,\]
where the first factor is an essential arrangement in $E^{n-1}$.
As consequence, the dga $E_2^{\bigcdot,\bigcdot}(M(A_{n-1}))$ splits as
\[E_2^{\bigcdot,\bigcdot}(M(A_{n-1})) \simeq E_2^{\bigcdot,\bigcdot}(M(A_{n-1})/E) \otimes_\Q H^\bigcdot(E;\Q), \]
where $ H^\bigcdot(E;\Q)$ is a dga concentrated in degrees $(\bigcdot,0)$ with trivial differentials.

\subsection{The braid hyperplane arrangement}
Notice that, since the poset of layers $\S(A_{n-1})$ has a maximal element (i.e. the curve $C$), the first column $E^{0,\bigcdot}_2 (M(A_{n-1}))$ is an algebra isomorphic to the cohomology algebra of the complement of braid hyperplane arrangement.
A lot is known about it, the most important can be found in \cite{OT92} and \cite{DeCP11}.

The Orlik-Solomon algebra is isomorphic to the cohomology algebra with integer (or rational) coefficients and a basis is given by the decreasing forest on $n$-vertices.

A \textit{forest} is an acyclic graph on the set $\{1, \dots,n\}$ and any of its connected components is called a \textit{tree}.
The \textit{root} of a tree is its vertex with the greatest label.
A \textit{descending} tree is a tree in which the unique path between the root and a vertex has decreasing labels, while a decreasing forest is a forest made of decreasing trees.
\begin{center}
\begin{tikzpicture}
\fill (0,0) circle (.04cm) node[left]{$1$};
\fill (1,0) circle (.04cm) node[right]{$2$};
\fill (0.5,1) circle (.04cm) node[left]{$3$};
\fill (2,0) circle (.04cm) node[above]{$6$};
\fill (3,0) circle (.04cm) node[left]{$4$};
\fill (3,1) circle (.04cm) node[left]{$5$};
\fill (3,2) circle (.04cm) node[left]{$7$};
\draw[thin] (0,0) -- (0.5,1) -- (1,0);
\draw[thin] (3,0) -- (3,1) -- (3,2);
\end{tikzpicture}
\end{center}

A basis of the cohomology algebra of the braid hyperplane arrangement is given by the cocycles $\omega_F$ where $F$ runs over all decreasing forests (as proven in \cite[Corollary~15]{CFV15}).
Any element $\omega_F$ has degree equal to the number of edges in $F$ and 
$\omega_F \omega_{F'}=0$ if the union graph $F\cup F'$ has a cycle.
In the case that $F\cup F'$ is a tree, the OS-relations allow us to write the product $\omega_F \omega_{F'}$ in terms of the generators in a recursive way.

The dimension of $H^k(M(A_{n-1}^H))$ is $\stirling{n}{n-k} $, the Stirling number of first kind.
The Poincaré polynomial of the hyperplane arrangement is 
\[P_{n-1}(t)=\prod_{j=1}^{n-1}(jt+1) \]
by \cite[Corollary 2]{Arnold69}, while the Tutte polynomial $T_n(x,y)$ was calculated in \cite[Theorem 5.1]{GS94} and has exponential generating function  
\begin{equation}
\sum_{n=1}^{\infty} \frac{T_n(x,y) t^n}{n!}= \frac{\left( \sum_{n=0}^{\infty} (\frac{t}{y-1})^n\frac{y^{\binom{n}{2}}}{n!} \right) ^{(y-1)(x-1)}-1 }{x-1}.
\end{equation}

The Tutte polynomial can be computed easily using the recursion formula in \cite{Pak}:
\[ T_{n+1}(x,y)= \sum_{k=1}^{n} \binom{n-1}{k-1}(x+y+y^2+ \dots + y^{k-1}) T_{k}(1,y)T_{n+1-k}(x,y).\]
For any hyperplane arrangement $\A^H$ the Poincaré polynomial is a specialization of the Tutte polynomial, e.g. for the braid arrangement we have:
\[P_{n-1}(x)=x^{n-1} T_n \left(\frac{1}{x},0 \right). \]

There is a natural action of $\mathfrak{S}_n$ on the braid arrangement $A^H_n$ given by permutation of coordinates.
Thus the cohomology algebra is a representation of the symmetric group.
This is described initially in top degree by \cite[Theorem 7.3]{Stanley82}
\begin{equation} \label{eq:repr_top}
H^{n-1}(M(A^H_{n-1})) = \sgn_n \cdot \Ind_{Z_n}^{\mathfrak{S}_n} \zeta_n,
\end{equation} 
where $Z_n$ is a subgroup of $\mathfrak{S_n}$ generated by an $n$-cycle and $\zeta_n$ is a non-trivial character of $Z_n$.
Later, in \cite[Theorem 4.5]{LS86}, a description for every degree has been given:
\[ H^k(M(A^H_n)) = \sgn_n \cdot \bigoplus_{\substack{\lambda \vdash n \\ |\lambda|=n-k}} \Ind_{Z(\lambda)}^{\mathfrak{S}_n} \xi_\lambda ;\]
this decomposition is a special case of \Cref{thm:weak_dec_E2} for $p=0$.

\subsection{The first two columns of the third page}

The following result is a straightforward consequence of \cite[Proposition 1.2]{AAB14}.
\begin{thm}\label{thm:AAB14}
For all $n>0$ the differentials $d_2^{0,q}$ are injective, therefore
\[ E^{0,q}_3(M(A_n))=0 \]
for all $q>0$.
\end{thm}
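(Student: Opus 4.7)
The plan is to deduce the statement from \cite[Proposition 1.2]{AAB14} via the comparison between the Bibby--Dupont and the Kri\v z models recalled at the beginning of this section.

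First, I would unpack the first column. Since the diagonal elliptic curve $C = \{P_1 = \cdots = P_n\}$ is the unique maximal element of $\S(A_n)$, we have
\[
E_2^{0,q}(M(A_n)) = H^0(C) \otimes H^q(M(\A_C^H)) \simeq H^q(M(A_n^H);\Q),
\]
i.e.\ the $q$-th rational Orlik--Solomon module of the braid hyperplane arrangement, which is spanned by decreasing forests with $q$ edges. Unfolding \eqref{eq:def_dd}, the differential sends $1\otimes \omega$ to
\[
d_2^{0,q}(1\otimes\omega) \;=\; \sum_{\substack{L<C\\ \rk L = q-1}} G_L(1)\otimes \sigma_L(\omega),
\]
where $G_L(1)$ is the Gysin fundamental class of the codimension-$1$ layer $L$ (a divisor $D_{i,j}$) and $\sigma_L$ is the Orlik--Solomon residue map.

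Next, I would invoke the isomorphism $E_2^{p,q}(M(A_n)) \simeq E^{p+q}_q$ between the Bibby--Dupont model and the Kri\v z model, noted just above in the paper. Under this identification, the column differential $d_2^{0,q}$ corresponds to the Kri\v z differential $E^q_q \to E^{q+1}_{q-1}$. Injectivity of this map is exactly the content of \cite[Proposition 1.2]{AAB14}, established for configuration spaces on compact complex manifolds whose first cohomology is non-trivial; the elliptic curve $E$ satisfies this hypothesis with $\dim_\Q H^1(E;\Q) = 2$. Once injectivity of $d_2^{0,q}$ is established, the conclusion $E_3^{0,q}(M(A_n)) = 0$ is immediate, since $E_3^{0,q}$ is, by definition, the kernel of $d_2^{0,q}$ (there is no incoming differential into the first column).

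The only non-trivial point in this plan is checking that the signs and the residue conventions used in \eqref{eq:prod_astratto}--\eqref{eq:def_dd} agree with those in the Kri\v z complex, so that the AAB14 injectivity statement transfers verbatim. This is a routine but careful bookkeeping exercise---no new geometric or combinatorial input is required---and it is the reason why the paper only advertises the theorem as a \emph{straightforward} consequence of \cite[Proposition 1.2]{AAB14}.
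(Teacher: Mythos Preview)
Your overall approach---invoking the identification $E_2^{p,q}(M(A_{n-1}))\simeq E_q^{p+q}$ with the Kri\v z model and then citing \cite[Proposition~1.2]{AAB14}---is exactly what the paper does; indeed the paper offers nothing beyond the sentence ``straightforward consequence of \cite[Proposition~1.2]{AAB14}'', so your write-up is already more detailed than the original.

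One correction to your unpacking of the first column: the formula $E_2^{0,q}(M(A_n)) = H^0(C)\otimes H^q(M(\A_C^H))$ is not right. The diagonal curve $C$ has rank $n-1$, so it contributes only to $E_2^{0,n-1}$; for general $q$ one has $E_2^{0,q}=\bigoplus_{\rk W=q} H^0(W)\otimes H^q(M(\A_W))$, summed over \emph{all} layers of rank $q$. Your displayed formula for $d_2^{0,q}$, with $L<C$ and $\rk L=q-1$, is therefore only the special case $q=n-1$. The conclusion $E_2^{0,\bullet}\simeq H^\bullet(M(A_{n-1}^H))$ is nonetheless correct, but it comes from the Brieskorn/Orlik--Solomon decomposition of the Orlik--Solomon algebra by flats, not from $C$ being the sole layer. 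This slip is harmless for the argument, since the substantive step is the transfer to the Kri\v z differential, which you handle correctly.
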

Moreover, $E^{0,0}_1(M(A_n))$ is a one dimensional $\Q$-vector space.
For each pair $(i,j)$, let $\omega_{i,j}$ be the generator of the cohomology group $H^1(M(A^H_{n-1})) \simeq E^{0,1}_2(M(A_{n-1}))$ corresponding to the divisor associated to the reflection $(i,j) \in \mathfrak{S}_n$.
A circuit $C=(c_1, \dots, c_k)$ of length $k$ (for $k>2$) is a simple cycle in the complete graph $K_n$ with $k$ edges, with a fixed orientation of the cycle, and a starting vertex.
For all edges $c$ in $C$ we denote with $s(c)$ the starting vertex and with $t(c)$ the vertex in which the edge terminates.
Hence, for a circuit $C$ we have $s(c_1)=t(c_k)$, $t(c_i) = s(c_{i+1})$ for all $i=1, \dots , k-1$ and $s(c_i)\neq s(c_j)$ for $i \neq j$.

\begin{example}
The cycles of length $k>2$ in the complete graph $K_n$ are $\frac{1}{2} \binom{n}{k} (k-1)!$ (equal to half the number of $k$-cycles in $\mathfrak{S}_n$).
The circuits of length $k>2$ in the complete graph $K_n$ are $\binom{n}{k} k!$.
\end{example}

\begin{defi}\label{def:omega_C}
For every ordered set of edges $C=(c_1,\dots, c_k)$ define the element $\omega_{C} \in E^{0,k}_2(M(A_n))$ as
\[ \omega_C \stackrel{\textnormal{def}}{=} \omega_{c_1} \omega_{c_2} \cdots \omega_{c_k},\]
where $\omega_c=\omega_{s(c),t(c)}\in E^{0,1}_2(M(A_{n-1}))$.
\end{defi}

Notice that if $C$ contains a circuit, then $\omega_{C}=0$, otherwise $C$ is a forest.
If $C$ is a forest, let $I(C)$ be the set
\[ I(C) \defeq \{(s(c),t(c)) \mid c \in C\}\]
and $W(C)$ the layer
\[ W(C) \defeq \{(P_1, \dots, P_n) \mid P_{s(c)}=P_{t(c)} \textnormal{ for all } c \in C\}.\]
The isomorphism $\varphi : A^{\bigcdot,\bigcdot} \rightarrow E_2^{\bigcdot,\bigcdot}(M(A_{n-1}))$ of \Cref{lemma:iso_dga} maps $\omega_{W(C),I(C)}$ to $\omega_C$.
Let $\pr_i: E^n \rightarrow E$ be the projection onto the $i^{\textnormal{th}}$ factor and consider $x_i= \pr_i^*(x)$ and $y_i=\pr_i^*(y)$.

\begin{defi}
For all circuits $C=(c_1, \dots, c_k)$ of the complete graph $K_n$, let $L_C, L'_C \in E_2^{1,k-2}(M(A_n))$ be the elements
\[ L_C \defeq \sum_{1\leq i <j \leq k} (-1)^{i+j} (x_{t(c_i)} - x_{s(c_i)}) \omega_{C\setminus \{c_i, c_j\} }\]
and
\[ L'_C \defeq \sum_{1\leq i <j \leq k} (-1)^{i+j} (y_{t(c_i)} - y_{s(c_i)}) \omega_{C\setminus \{c_i, c_j\} }\]
\end{defi}

Let $\tilde{C}$ be the circuit obtained from $C=(c_1, \dots, c_k)$ by changing the orientation, i.e. $\tilde{C}=(\tilde{c}_k, \tilde{c}_{k-1}, \dots, \tilde{c}_1)$ where $s(\tilde{c}_i)=t(c_i)$ and $t(\tilde{c}_i)=s(c_i)$.
The following relation holds:
\[ L_{\tilde{C}}=(-1)^{\binom{k-2}{2}}L_C,\]
indeed
\begin{align*}
L_{\tilde{C}} &= \sum_{1\leq i <j \leq k} (-1)^{i+j} (x_{s(c_{k-i})} - x_{t(c_{k-i})}) \omega_{\tilde{C} \setminus \{c_{k-i}, c_{k-j}\} } \\
&= \sum_{1\leq a < b \leq k} (-1)^{k-b+k-a} (x_{s(c_b)} - x_{t(c_a)}) \omega_{\tilde{C} \setminus \{c_a, c_b\} } \\
&= \sum_{1\leq a < b \leq k} (-1)^{a+b} (x_{t(c_a)} - x_{s(c_b)}) \omega_{\tilde{C} \setminus \{c_a, c_b\} } \\
&= (-1)^{\binom{k-2}{2}} \sum_{1\leq a < b \leq k} (-1)^{a+b} (x_{t(c_a)} - x_{s(c_b)}) \omega_{C \setminus \{c_a, c_b\}} \\
&= (-1)^{\binom{k-2}{2}} L_C.
\end{align*}

\begin{thm}
Let $C$ be a circuit in the complete graph $K_n$.
Then the elements $L_C$ and $L'_C$ belong to $\ker d_2^{1,k-2}$.
\end{thm}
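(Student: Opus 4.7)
The plan is to compute $d_2(L_C)$ directly by the Leibniz rule and then collect terms according to the unordered triple of edges one removes. Set $u_e := x_{t(e)} - x_{s(e)}$, $v_e := y_{t(e)} - y_{s(e)}$, and $\alpha_e := u_e v_e$, so that $d_2(\omega_e) = \alpha_e$ on single edges. Since $d_2$ vanishes on each $x_i$ and $y_i$, and since $u_{c_i}$ has odd total degree, applying the Leibniz rule to a summand of $L_C$ gives
\[
d_2\bigl((-1)^{i+j} u_{c_i}\,\omega_{C\setminus\{c_i,c_j\}}\bigr) = (-1)^{i+j+1} u_{c_i}\!\sum_{l\ne i,j}(-1)^{\tau_{i,j}(l)} \alpha_{c_l}\,\omega_{C\setminus\{c_i,c_j,c_l\}},
\]
with $\tau_{i,j}(l)$ the position of $c_l$ in the ordered forest $C\setminus\{c_i,c_j\}$.

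The next step is to reorganize the resulting triple sum by unordered triples $\{p,q,r\}$ of removed edges, $p<q<r$. For a fixed triple, the three contributions correspond to $(i,j,l)\in\{(p,q,r),(p,r,q),(q,r,p)\}$; keeping track of $\tau_{i,j}(l)$, which equals $l-1$, $l-2$, or $l-3$ according to whether $l<i$, $i<l<j$, or $l>j$, a direct sign computation collapses the triple's contribution to
\[
(-1)^{p+q+r}\bigl(u_{c_p}\alpha_{c_r} - u_{c_p}\alpha_{c_q} + u_{c_q}\alpha_{c_p}\bigr)\,\omega_F,\qquad F := C\setminus\{c_p,c_q,c_r\}.
\]

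The final step uses the circuit hypothesis: since $C$ is a cycle, $\sum_{e\in C} u_e$ telescopes to $0$ in $E_2^{1,0}$ (and similarly for $v_e$), while relation \eqref{eq:x_i_omega_i} forces $u_e\omega_F = v_e\omega_F = 0$ for every $e\in F$. Combined, these yield the key three-term relations
\[
(u_{c_p}+u_{c_q}+u_{c_r})\,\omega_F = 0 \quad\text{and}\quad (v_{c_p}+v_{c_q}+v_{c_r})\,\omega_F = 0.
\]
Substituting these into $\alpha_{c_r}\omega_F = u_{c_r}v_{c_r}\omega_F$ and expanding using the anticommutativity of odd generators yields $\alpha_{c_r}\omega_F = (\alpha_{c_p}+\alpha_{c_q}+u_{c_p}v_{c_q}+u_{c_q}v_{c_p})\omega_F$. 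Left-multiplying by $u_{c_p}$, and using $u_{c_p}^2 = 0$ together with $u_{c_p}u_{c_q}v_{c_p} = -u_{c_q}\alpha_{c_p}$, simplifies to $u_{c_p}\alpha_{c_r}\omega_F = u_{c_p}\alpha_{c_q}\omega_F - u_{c_q}\alpha_{c_p}\omega_F$, which is exactly the cancellation required. The same argument, with $x$ and $y$ interchanged, handles $L'_C$.

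The main obstacle is the sign bookkeeping in the second step, where $\tau_{i,j}(l)$ splits into three cases depending on the relative position of $l$ with respect to $i$ and $j$. Once the signs are aligned, the rest reduces cleanly to the three-edge cycle identity already visible in the $k=3$ case.
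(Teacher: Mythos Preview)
Your proposal is correct and follows essentially the same route as the paper's proof: both compute $d_2(L_C)$ via Leibniz, regroup the resulting sum by unordered triples $\{p,q,r\}$ of removed edges, and then kill each triple's coefficient using the cycle identities $(u_{c_p}+u_{c_q}+u_{c_r})\omega_F=0$ and $(v_{c_p}+v_{c_q}+v_{c_r})\omega_F=0$. The only differences are cosmetic---your position-based sign $\tau_{i,j}(l)$ versus the paper's $\sigma(i,j,a)$, and your packaging of the final cancellation via the expansion of $\alpha_{c_r}\omega_F$ rather than substituting for $v_{c_\gamma}$ and $u_{c_\gamma}$ separately---but the underlying computation is identical.
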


\begin{proof}
We compute $\dd_2 (L_C)$, the calculation of $\dd_2 (L'_C)$ being completely analogous:
\begin{align*}
\dd_2 (L_C)&=- \sum_{1 \leq i < j \leq k} (-1)^{i+j}(x_{t(c_i)} - x_{s(c_i)}) \dd_2(\omega_{C\setminus \{c_i, c_j\}}) \\
&= - \sum_{1 \leq i < j \leq k} \sum_{a \neq i,j} (-1)^{i+j+ \sigma(i,j,a)} u_{i} u_{a} v_{a} \omega_{C\setminus \{c_i, c_j,c_a\})},
\end{align*}
where $u_{i}=x_{t(c_i)} - x_{s(c_i)}$, $v_{i}=y_{t(c_i)} - y_{s(c_i)}$ for all oriented edge $c_i$ and $\sigma(i,j,a)=a$ if $i<a<j$, $\sigma(i,j,a)=a+1$ otherwise.
Let $1 \leq \alpha < \beta < \gamma \leq k$ be a triple of integers and calculate the coefficient $n(\alpha,\beta,\gamma)$ of $\omega_{C \setminus \{ c_\alpha, c_\beta, c_\gamma\}}$ in $\dd_2(L_C)$.
Up to a sign, this coefficient is equal to
\begin{multline*}
 n(\alpha,\beta,\gamma)= (-1)^{\alpha+\beta+ \sigma(\alpha,\beta,\gamma)} u_\alpha u_\gamma v_\gamma + (-1)^{\alpha+\gamma+ \sigma(\alpha,\gamma,\beta)} u_\alpha u_\beta v_\beta +\\
 + (-1)^{\beta+\gamma+ \sigma(\beta,\gamma,\alpha)}  u_\beta u_\alpha v_\alpha .
\end{multline*}
Notice that $v_\gamma \omega_{C \setminus \{ c_\alpha, c_\beta, c_\gamma\}}=-(v_\alpha + v_\beta) \omega_{C \setminus \{ c_\alpha, c_\beta, c_\gamma\}}$,
hence we can rewrite the $n(\alpha,\beta,\gamma)$ as:
\begin{multline*}
 n(\alpha,\beta,\gamma)= - (-1)^{\alpha+\beta+ \sigma(\alpha,\beta,\gamma)} u_\alpha u_\gamma v_\beta + (-1)^{\alpha+\gamma+ \sigma(\alpha,\gamma,\beta)} u_\alpha u_\beta v_\beta +\\
 - (-1)^{\alpha+\beta+ \sigma(\alpha,\beta,\gamma)} u_\alpha u_\gamma v_\alpha + (-1)^{\beta+\gamma+ \sigma(\beta,\gamma,\alpha)}  u_\beta u_\alpha v_\alpha
\end{multline*}
Analogously, we have $u_\alpha u_\gamma \omega_{C \setminus \{ c_\alpha, c_\beta, c_\gamma\}}=-u_\alpha u_\beta \omega_{C \setminus \{ c_\alpha, c_\beta, c_\gamma\}}$, thus:
\begin{align*}
n(\alpha,\beta,\gamma) =& \,\begin{multlined}[t][10cm]
  ((-1)^{\alpha+\beta+ \sigma(\alpha,\beta,\gamma)}+ (-1)^{\alpha+\gamma+ \sigma(\alpha,\gamma,\beta)}) u_\alpha u_\beta v_\beta +\\
 +(-(-1)^{\alpha+\beta+ \sigma(\alpha,\beta,\gamma)} + (-1)^{\beta+\gamma+ \sigma(\beta,\gamma,\alpha)})  u_\beta u_\alpha v_\alpha
\end{multlined}\\
= & \begin{multlined}[t][10cm] ((-1)^{\alpha+\beta+\gamma+1}+ (-1)^{\alpha+\gamma+\beta}) u_\alpha u_\beta v_\beta +\\
+(-(-1)^{\alpha+\beta+\gamma} + (-1)^{\beta+\gamma+\alpha})  u_\beta u_\alpha v_\alpha
\end{multlined}\\
= & 0. \qedhere
\end{align*}
\end{proof}

\begin{remark}
The same elements $L_C$ and $L'_C$ can be defined for graphical arrangements and more generally for all unimodular circuits in an elliptic arrangement.
In the last case, the equations of the divisors must be chosen such that $D_i= \pi_i^{-1}(Q_i)$ for $i=1,\dots k$ and $\pi_1 + \cdots + \pi_{k-1}=\pi_k$.
Hence, the elements
\[ \sum_{1\leq i <j \leq k} (-1)^{i+j} \pi_i^*(x) \omega_{C\setminus \{c_i, c_j\} },\]
for every circuit $C$ and $x \in H^1(E)$, represent non-zero cohomology classes.
\end{remark}

\subsection{Labelled and weighted decreasing forests}
A basis for the vector space $E_2^{\bigcdot,\bigcdot}(M(A_{n-1}))$ is given by the labelled decreasing forests.

\begin{defi}
A \textit{labelled decreasing forest} $(F,s)$ is a decreasing forest $F$ with a labelling $s$ of the roots with values in $\{1,x,y,xy\}$ (the basis of $H^\bigcdot(E)$).
The bi-degree of $(F,s)$ is 
\[\deg (F,s)= (\sum_{i \in R} \deg s(i), |F|),\]
where $R$ is the set of the roots of $F$, $\deg: \{1,x,y,xy\} \rightarrow \{0,1,2\}$ is the degree function and $|F|$ is the number of edges in $F$. 
\end{defi}

Every forest $F$ in $K_n$ can be considered as a list of edges, putting the edges in lexicographical order.
As discussed in the previous subsections (see \Cref{def:omega_C}), consider -- for each forest $F$ -- the element $\omega_F\in E^{0,|F|}_2(M(A_{n-1}))$.
For each labelled decreasing forest $(F,s)$, define the element
\[ \omega_{F,s} \defeq \prod_{i\in R} \pi_i^*(s(i)) \omega_F \in E^{\deg (F,s)}_2 (M(A))\]
where the product is in increasing order.


\begin{figure}
\centering
\begin{tikzpicture}
\fill (0,0) circle (.04cm) node[left]{$1$};
\fill (1,0) circle (.04cm) node[right]{$2$};
\fill (0.5,1) circle (.04cm) node[label={[blue]45:$x$}, left]{$3$};
\fill (2,0) circle (.04cm) node[label={[blue]45:$1$}, left]{$6$};
\fill (3,0) circle (.04cm) node[left]{$4$};
\fill (3,1) circle (.04cm) node[left]{$5$};
\fill (3,2) circle (.04cm) node[label={[blue]45:$xy$}, left]{$7$};
\draw[thin] (0,0) -- (0.5,1) -- (1,0);
\draw[thin] (3,0) -- (3,1) -- (3,2);
\end{tikzpicture}
\caption{A labelled forest, in blue the labels. The associated element is $x_3x_7y_7\omega_{1,3}\omega_{2,3}\omega_{4,5}\omega_{5,7}$.} \label{fig:F_s}
\end{figure}
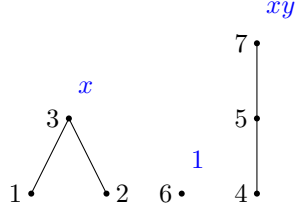

\begin{prop}
The set $\{\omega_{F,s}\}$, where $(F,s)$ runs over all labelled decreasing forests, is a basis of $E^{\bigcdot,\bigcdot}_2(M(A_n))$ as $\Q$-vector space.
\end{prop}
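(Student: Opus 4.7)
The plan is to use the defining decomposition
\[E_2^{p,q}(M(A_{n-1})) = \bigoplus_{\rk W = q} H^p(W;\Q) \otimes_\Q H^q(M(\A_W);\Q)\]
and exhibit a basis of each summand separately. First I would identify the rank-$q$ layers of the braid elliptic arrangement with the set partitions $\pi$ of $[n]$ into $n-q$ blocks: the layer $W_\pi$ is the subvariety cut out by $P_i=P_j$ whenever $i,j$ belong to the same block, so $W_\pi \simeq E^{n-q}$, with an isomorphism given explicitly by the projection onto the roots (i.e.\ the maximal elements) of the blocks.

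Next I would analyse the two tensor factors for each fixed $W_\pi$. The restricted arrangement $\A_{W_\pi}$ splits as a product of smaller braid hyperplane arrangements, one on each block of $\pi$, so by the nbc theorem for the braid arrangement recalled in this section, its top Orlik--Solomon cohomology $H^q(M(\A_{W_\pi}))$ has a $\Q$-basis indexed by decreasing spanning forests of $\pi$, that is, decreasing forests $F$ on $[n]$ whose connected components are exactly the blocks of $\pi$; the basis element associated to $F$ is precisely $\omega_F$ as defined in \Cref{def:omega_C}. On the other side, the root-coordinate map identifies $W_\pi$ with $E^{n-q}$, and on $W_\pi$ the projections $\pi_i$ for $i$ in a block all agree with $\pi_{r}$, where $r$ is the root of that block. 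Hence K\"unneth furnishes a basis of $H^\bullet(W_\pi;\Q)$ given by the monomials $\prod_{i\in R(\pi)} \pi_i^*(s(i))\bigr|_{W_\pi}$, where $R(\pi)$ is the set of roots and $s$ assigns to each root an element of the basis $\{1,x,y,xy\}$ of $H^\bullet(E)$.

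Combining these two descriptions, a basis of the $W_\pi$-summand of $E_2^{\bigcdot,\bigcdot}$ is given by the elements $\omega_{F,s}$ where $F$ is a decreasing spanning forest of $\pi$ and $s$ labels the roots of $F$. Since every decreasing forest $F$ on $[n]$ determines a unique partition $\pi(F)$ of $[n]$ (its components), and since the roots of $F$ coincide with $R(\pi(F))$, summing over all $\pi$ is the same as summing over all decreasing forests $F$. Thus the whole family $\{\omega_{F,s}\}$ indexed by labelled decreasing forests is a basis of $E_2^{\bigcdot,\bigcdot}(M(A_{n-1}))$, and inspection of the bi-degrees shows that the labelled forest of bi-degree $(p,q)$ lands in $E_2^{p,q}$.

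The only non-formal step is checking that the pullbacks $\pi_r^*|_{W_\pi}$ for $r$ running over the roots really give the K\"unneth generators of $H^\bullet(W_\pi) \simeq H^\bullet(E^{n-q})$; this is immediate from the parametrisation of $W_\pi$ by the root coordinates. Everything else reduces to an indexing argument together with the OS nbc basis, and no circuit or differential relation intervenes at this stage.
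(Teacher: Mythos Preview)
Your argument is correct and is exactly the natural proof. The paper actually states this proposition without proof; the ingredients you invoke --- the layer decomposition of $E_2^{\bullet,\bullet}$, the identification of rank-$q$ layers with set partitions of $[n]$ into $n-q$ blocks, the decreasing-forest nbc basis of the Orlik--Solomon algebra cited just before the proposition, and the K\"unneth description of $H^\bullet(W_\pi)$ via root coordinates --- are precisely the pieces the paper has set up, and your assembly of them is the intended one. The same dimension count already appears in the proof of \Cref{lemma:iso_dga} for general essential elliptic arrangements (the basis $\prod z_k\,\omega_{W,I}$ with $(W,I)\in F'$), and your proof is simply the specialisation of that argument to the braid case with the nbc sets realised as decreasing forests.
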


\begin{defi}
A \textit{weighted decreasing forest} is a decreasing forest $F$ with a function $t: R \rightarrow \{ 0,1,2\}$.
The bi-degree of $(F,t)$ is 
\[\deg (F,s)= \left(\sum_{i\in R} t(i), |F| \right).\]
\end{defi}

Each labelled forest $(F,s)$ defines the weighted forest $(F,\deg \circ s)$.

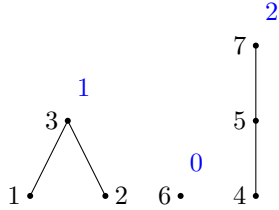
\begin{figure}
\centering
\begin{tikzpicture}
\fill (0,0) circle (.04cm) node[left]{$1$};
\fill (1,0) circle (.04cm) node[right]{$2$};
\fill (0.5,1) circle (.04cm) node[label={[blue]45:$1$}, left]{$3$};
\fill (2,0) circle (.04cm) node[label={[blue]45:$0$}, left]{$6$};
\fill (3,0) circle (.04cm) node[left]{$4$};
\fill (3,1) circle (.04cm) node[left]{$5$};
\fill (3,2) circle (.04cm) node[label={[blue]45:$2$}, left]{$7$};
\draw[thin] (0,0) -- (0.5,1) -- (1,0);
\draw[thin] (3,0) -- (3,1) -- (3,2);
\end{tikzpicture}
\caption{The weighted forest $(F,t)$ associated to the forest of \Cref{fig:F_s}, in blue the labels.}\label{fig:F_t}
\end{figure}

\begin{defi}
The \textit{support} of a forest $F$ is the partition $\supp(F)$ of $[n]$ whose blocks are the sets of vertices in the same connected component.
The \textit{shape} of $F$ is $\shape (F)$, the partition of the number $n$ associated with $\supp(F)$ (i.e. the size of the trees in $F$).

If $(F,s)$ (resp. $(F,t)$) is a labelled (resp. weighted) decreasing forest, we can label (resp. weigh) the blocks of $\shape (F)$ and the ones of $\supp(F)$ with the function $s$ (resp. $t$).
\end{defi}

The support of $F$ is the partition defined in \cite[section 3.3 and remark 3.8]{Bibby17} associated with the layer on which $\omega_{F,s}$ is supported (this does not depend on $s$).

\begin{example} \label{example:supp_shape}
Let $(F,s)$ and $(F,t)$ be the labelled and weighted forests as in \Cref{fig:F_s,fig:F_t}.
The support of $F$ is $\supp(F)=\{\{1,2,3\},\{6\}, \{4,5,7\}\}$ with labelling and weight:
\[\begin{tabular}{c|c|c|c}
 & \{1,2,3\} & \{6\} & \{4,5,7\} \\ 
\hline 
$s$ & $x$ & $1$ & $xy$ \\ 
\hline 
$t$ & $1$ & $0$ & $2$ \\ 
\end{tabular} 
\]
The shape of the labelled forest $(F,s)$ is the partition represented by the tableau 
\[
\begin{Young}
    $xy$ & & \cr
    $x$  & & \cr
    $1$ \cr
\end{Young}
\]
where every row is labelled; we put the labels in the first column.
The shape of the weighted forest $(F,t)$ is
\[
\begin{Young}
      $2$ & & \cr
      $1$ & & \cr
      $0$\cr
\end{Young}.
\]
\end{example}

Let $(F,s)$ be a labelled forest (now we do not assume the forest is decreasing) and let $t= \deg \circ s$ be the weight.
The support of $(F,s)$ is $\Sigma \vdash [n]$ and whose shape is $\lambda \vdash n$.
Define the following subspaces of $E^{\deg(F,s)}_2(M(A_{n-1}))$:
\begin{align*}
& E(F,t)= \Q \langle \omega_{F,s'} \mid t= \deg \circ s' \rangle \\
& E(\Sigma,s)= \Q \langle \omega_{F',s} \mid \supp (F',s)=(\Sigma,s) \rangle \\
& E(\Sigma,t)= \bigoplus_{\deg \circ s'=t} E(\Sigma,s') \\
& E(\lambda,s)= \bigoplus_{\Sigma'} E(\Sigma',s) \\
& E(\lambda,t)= \bigoplus_{\Sigma'} E(\Sigma',t).
\end{align*}

Notice that $E^{p,q}_2(M(A_{n-1}))= \bigoplus_{\lambda,t} E(\lambda,t)$, where the sum is over all partitions $\lambda \vdash n$ into $n-q$ blocks and all labels $t$ of $\lambda$ such that $\sum_{i=1}^{n-q} t(i)=p$.

\begin{remark}\label{remark:sum_repr}
The vector space $E(\lambda,s)$ is stable under the action of $\mathfrak{S}_n$ and $E(\lambda,t)$ is stable under the action of $\mathfrak{S}_n \times \SL_2(\Q)$.
\end{remark}

\subsection{Linear independence of \texorpdfstring{$L_C$}{LC}}
We call a circuit $C$ \textit{standard} if the two greatest vertices in $C$ are adjacent and we denote the dimension of $E_2^{p,q}(M(A_n))$ (respectively, of $E_3^{p,q}(M(A_n))$) with 
$e_2^{p,q}(n)$ (resp. $e_3^{p,q}(n)$).

\begin{thm}\label{thm:independency}
The elements $L_C, L'_C$, where $C$ runs over all standard circuits, are linearly independent, so $e_3^{1,q}(n) \geq 2\binom{n}{q+2} q!$.
\end{thm}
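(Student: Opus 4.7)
Since $E_2^{-1,q+1}(M(A_n))=0$, we have $E_3^{1,q}(M(A_n))=\ker d_2^{1,q}$, and by the preceding theorem every $L_C$ and $L_C'$ already lies in this kernel. It therefore suffices to show that the span of $\{L_C,\,L_C'\}_{C\text{ standard}}$ has dimension at least $2\binom{n}{q+2}q!$ inside $E_2^{1,q}(M(A_n))$. The plan is to work in the basis of labelled decreasing forests and exhibit, for each equivalence class of standard circuits (modulo cyclic shift and orientation reversal), a distinguishing basis element that appears with nonzero coefficient in exactly one such class.

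First I would show that $L_C$ depends only on the equivalence class of $C$ under cyclic shift and orientation reversal. The identity $L_{\tilde C}=(-1)^{\binom{k-2}{2}}L_C$ is already displayed; an analogous computation, using the identity $(x_{t(c)}-x_{s(c)})\omega_c=0$ coming from \eqref{eq:x_i_omega_i}, shows that $L_C$ is also invariant up to a predictable sign under cyclic shifts (the verification for $k=3$ is immediate and the general case follows the same pattern). Consequently, equivalence classes of standard circuits on a fixed $k$-subset $\{v_1<\cdots<v_k\}$ biject with the $(k-2)!$ Hamilton paths from $v_{k-1}$ to $v_k$ through $\{v_1,\dots,v_{k-2}\}$, for a total of $\binom{n}{k}(k-2)!=\binom{n}{q+2}q!$ classes.

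Next I would isolate a canonical contribution of $L_C$. Choose the representative with $c_1=(v_k,v_{k-1})$ and cyclic sequence $v_k,v_{k-1},u_1,\dots,u_{k-2}$. In the defining sum only the pair $(i,j)=(1,k)$ removes both edges incident to $v_k$, so it alone produces the $2$-block partition $\Pi=\{\{v_{k-1},u_1,\dots,u_{k-2}\},\{v_k\}\}$ of the vertex set of $C$ (together with singleton blocks for the vertices not in $C$); every other $(i,j)$ contributes to a different partition. The projection of $L_C$ onto the corresponding summand $E(\Pi,\bigcdot)\subset E_2^{1,q}$ is therefore
\[(-1)^{k+1}(x_{v_{k-1}}-x_{v_k})\,\omega_P,\]
where $\omega_P$ corresponds to the path $v_{k-1}-u_1-\cdots-u_{k-2}$. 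Extracting the $x_{v_k}$ piece places the $x$-label on the isolated root $v_k$, and reduces the problem to writing $\omega_P$ in the basis of decreasing trees on $\{v_{k-1},u_1,\dots,u_{k-2}\}$ rooted at $v_{k-1}$ via Orlik--Solomon relations. Since Hamilton paths rooted at $v_{k-1}$ and decreasing trees rooted at $v_{k-1}$ on this vertex set are both counted by $(k-2)!$, the change-of-basis matrix is square; I would verify, by induction on the number of ascents in the path, that for an appropriate total order refining the ascent statistic on decreasing trees this matrix is unitriangular, yielding an injective leading-term map $P\mapsto T(P)$. Set $\Phi(C)$ to be the labelled decreasing forest consisting of the tree $T(P)$ rooted at $v_{k-1}$, the isolated root $v_k$ bearing the label $x$, and all remaining vertices of $[n]$ as unlabelled isolated roots; define $\Phi'(C)$ identically with $y$ in place of $x$. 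Within each fixed $k$-subset the supports $\Pi$ coincide for all standard $C$, but distinct Hamilton paths yield distinct leading trees $T(P)$; across $k$-subsets the supports differ outright. Hence $\Phi(C)$ occurs with nonzero coefficient in $L_C$ and with zero coefficient in every other $L_{C'}$, and analogously for $\Phi'(C)$. As $x$-labelled and $y$-labelled basis elements are disjoint, the combined family is linearly independent, giving $e_3^{1,q}(n)\geq 2\binom{n}{q+2}q!$.

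The main obstacle is verifying the unitriangularity of the change-of-basis matrix from $\{\omega_P\}$ to $\{\omega_T\}$. The Orlik--Solomon rewriting is iterative and the signs propagating through the repeated triangle relations must be tracked carefully; identifying the right combinatorial order on decreasing trees (for example lexicographic on root-to-leaf paths, or something refining a descent statistic of the Hamilton path) that makes this matrix triangular is the central combinatorial step of the argument.
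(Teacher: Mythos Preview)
Your approach mirrors the paper's core idea: project onto the summand $E(\Sigma,s)$ in which the maximal vertex $v_k$ of the circuit is isolated and carries the label $x$ (resp.\ $y$), so that only the term $(i,j)=(1,k)$ of $L_C$ survives and the problem reduces to the linear independence of the path elements $\omega_P$ inside the top-degree Orlik--Solomon space on the remaining $k-1$ vertices. The difference lies entirely in how this last step is handled. You propose to express each $\omega_P$ in the decreasing-tree basis and argue that the change-of-basis matrix is unitriangular for a suitable total order---the ``main obstacle'' you flag, and one you leave unproved. The paper sidesteps it by introducing a different basis of the Orlik--Solomon algebra, indexed not by decreasing trees but by \emph{standard forests of bamboo} (\Cref{lemma:basis_bamboo}): forests in which every component is a path with its maximal vertex at one end. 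Each $\omega_P$ arising from a standard circuit is already literally a standard bamboo, so the projected elements $p(L_C)$ are themselves distinct basis vectors and linear independence is immediate---no change-of-basis computation, no leading-term map $\Phi$, and no preliminary discussion of cyclic-shift invariance are needed. Your unitriangularity claim may well hold, but establishing it rigorously would be a separate combinatorial lemma; the bamboo basis makes the whole argument one line.
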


\begin{question}\label{claim:E3_1q}
Does the equality $e_3^{1,q}(n)=2\binom{n}{q+2} q!$ hold?
\end{question}

A \textit{forest of bamboo} is a graph on $[n]$ such that every vertex has degree at most two; a \textit{standard forest of bamboo} is a forest of bamboo such that the maximum vertex of every connected component is an extremity of the bamboo (i.e., a vertex of degree one).
The element $\omega_B$ (recall \Cref{def:omega_C}) is associate to each standard forest of bamboos, considered as a list of its edges in lexicographical order.


\begin{lemma}\label{lemma:basis_bamboo}
The set
\[ \{\omega_B \mid B \textnormal{ is a standard forest of bamboo}\}\]
is a basis of $H^\bigcdot (M(A^H_{n-1}))$.
\end{lemma}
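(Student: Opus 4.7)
The plan is to combine a cardinality check with a spanning argument by induction on $n$.

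First, I count standard forests of bamboo. A standard bamboo on a vertex set of size $m$ is determined by a linear ordering of its $m-1$ non-maximal elements, giving $(m-1)!$ bamboos per component, so
\[
\#\{\text{standard forests of bamboo on $[n]$ with $k$ trees}\} = \sum_{\pi \vdash [n],\,|\pi|=k} \prod_{B \in \pi}(|B|-1)! = \stirling{n}{k},
\]
matching $\dim H^{n-k}(M(A^H_{n-1}))$. Since the cardinality is correct, it is enough to show the $\omega_B$ span $H^\bullet(M(A^H_{n-1}))$.

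For spanning, I argue by induction on $n$ that every product $\omega_F$, for $F$ a subforest of $K_n$, lies in the span of the $\omega_B$. The key tool is the Orlik--Solomon triangle relation
\[
\omega_{ab}\omega_{an} - \omega_{ab}\omega_{bn} + \omega_{an}\omega_{bn} = 0 \qquad (a < b < n),
\]
applied at the vertex $n$. Whenever $n$ has two neighbors $a,b$ in $F$, the relation rewrites $\omega_{an}\omega_{bn}$ as a combination of products in which $n$ has one fewer incident edge; the substitution introduces an edge $(a,b)$ between two smaller vertices, but does not create a cycle because $a$ and $b$ were connected in $F$ only through $n$. Iterating reduces to the case $\deg_F(n) \leq 1$: if $n$ is isolated, the induction hypothesis on $[n-1]$ applies directly; otherwise $n$ is pendant at some $v$, and I factor $\omega_F = \pm\,\omega_{vn}\cdot\omega_{F \smallsetminus (v,n)}$ and expand $\omega_{F \smallsetminus (v,n)}$ in the standard bamboo basis on $[n-1]$ by induction.

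The remaining step is to analyze $\omega_{vn}\cdot\omega_{B'}$ for a standard bamboo $B'$ on $[n-1]$. When $v$ is an endpoint of its bamboo component (or forms a singleton component), attaching $n$ at $v$ produces a standard bamboo on $[n]$ with $n$ as the new maximal endpoint, so the product equals $\pm\omega_B$. When $v$ is strictly interior, I would carry out an auxiliary reduction: applying the Orlik--Solomon relation on the triangle $\{v, x, n\}$, with $x$ the neighbor of $v$ toward the non-maximum endpoint of the bamboo segment, slides the attachment of $n$ one step along the path; iterating slides $n$ all the way to that endpoint, producing standard bamboos on $[n]$. The main technical obstacle is ensuring termination of both reductions without re-introducing high-degree vertices: I would control the outer straightening at $n$ by a lexicographic induction on the degree sequence $(\deg_F(n), \deg_F(n-1), \ldots)$, which strictly decreases with each triangle application at $n$, and handle the inner sliding by induction on the distance from $v$ to the non-maximum endpoint of its component in $B'$.
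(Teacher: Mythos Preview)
The paper states this lemma without proof, supplying only an illustrative example of the Orlik--Solomon rewriting, so there is no paper argument to compare against; what follows is an assessment of your proof on its own.

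Your cardinality count is correct. The gap is in the inner ``sliding'' step of the spanning argument. Applying the Orlik--Solomon relation on the triangle $\{x,v,n\}$ does not produce a single term with the pendant $n$ moved from $v$ to $x$; it produces two. Up to sign,
\[
\omega_{xv}\,\omega_{vn}=\omega_{xv}\,\omega_{xn}+\omega_{xn}\,\omega_{vn},
\]
and while the first summand is the one you describe, the second replaces the edge $(x,v)$ by the pair $(x,n),(v,n)$, so that $n$ now sits in the interior of the path with $\deg(n)=2$. This term is outside the hypothesis of your inner induction (which assumes $n$ pendant) and it \emph{increases} $\deg_F(n)$, so your outer lexicographic measure goes up, not down; re-applying the same triangle to it just returns the original expression. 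The two inductions you propose therefore do not combine into a well-founded recursion. To repair the argument you would need a single measure on trees that strictly decreases under \emph{both} outputs of every triangle move you use; neither ``$\deg(n)$'' nor ``distance of the attachment point to an endpoint'' achieves this. A cleaner route is to replace spanning by linear independence on each block, using the classical identification of $H^{m-1}(M(A^H_{m-1}))$ with the arity-$m$ part of the Lie operad, under which the $(m-1)!$ standard bamboos become the left-combed Lie words.
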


\begin{example}
The forest in \Cref{fig:F_s} (forgetting the labelling) is a forest of bamboo, but not a standard forest of bamboo.
However, the monomial $\omega_{1,3}\omega_{2,3} \omega_{4,5} \omega_{5,7}$ is equal to 
\[ \omega_{1,2}\omega_{2,3} \omega_{4,5} \omega_{5,7}
- \omega_{1,2} \omega_{1,3} \omega_{4,5} \omega_{5,7}
\]
in $H^4 (M(A^H_6))$ that corresponds to two standard forest of bamboo.

\begin{figure}
\centering
\begin{subfigure}[b]{0.49\textwidth}
\centering
\begin{tikzpicture}
\fill (1,0) circle (.04cm) node[left]{$1$};
\fill (1,1) circle (.04cm) node[left]{$2$};
\fill (1,2) circle (.04cm) node[left]{$3$};
\fill (2,0) circle (.04cm) node[left]{$6$};
\fill (3,0) circle (.04cm) node[left]{$4$};
\fill (3,1) circle (.04cm) node[left]{$5$};
\fill (3,2) circle (.04cm) node[left]{$7$};
\draw[thin] (1,0) -- (1,1) -- (1,2);
\draw[thin] (3,0) -- (3,1) -- (3,2);
\end{tikzpicture}
\caption{A standard forest of bamboo whose associated element is $\omega_{1,2}\omega_{2,3} \omega_{4,5}\omega_{5,7}$.} 
\end{subfigure}
\begin{subfigure}[b]{0.49\textwidth}
\centering
\begin{tikzpicture}
\fill (1,0) circle (.04cm) node[left]{$2$};
\fill (1,1) circle (.04cm) node[left]{$1$};
\fill (1,2) circle (.04cm) node[left]{$3$};
\fill (2,0) circle (.04cm) node[left]{$6$};
\fill (3,0) circle (.04cm) node[left]{$4$};
\fill (3,1) circle (.04cm) node[left]{$5$};
\fill (3,2) circle (.04cm) node[left]{$7$};
\draw[thin] (1,0) -- (1,1) -- (1,2);
\draw[thin] (3,0) -- (3,1) -- (3,2);
\end{tikzpicture}
\caption{A standard forest of bamboo whose associated element is $\omega_{1,2}\omega_{1,3}\omega_{4,5}\omega_{5,7}$.} 
\end{subfigure}
\caption{two standards forest of bamboo.}
\label{fig:forest_bamboo}
\end{figure}
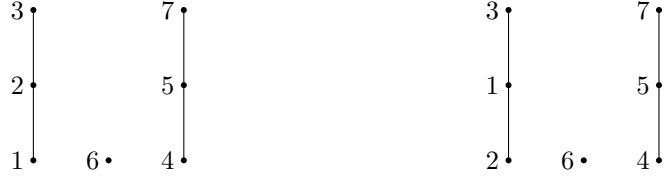
\end{example}

\begin{proof}[Proof of \Cref{thm:independency}]
We prove the theorem by contradiction: let 
\[ \sum_{C \textnormal{ standard}} a_C L_C = \sum_{C \textnormal{ standard}} a'_C L'_C \]
be a non-trivial linear combination.
Suppose there exists a standard circuit $\tilde{C}$
such that $a_{\tilde{C}} \neq 0$.
The case in which $a'_{C} \neq 0$ is analogous.
Let $\tilde{B}$ be the standard bamboo obtained by deleting from $\tilde{C}$ the two edges with end the maximal vertex.
Since $\tilde{B}$ is a forest, we can label all roots with $1$ except the maximal vertex of $\tilde{C}$ which we label with $x$.
Let $(\Sigma,s)$ be the support of $\tilde{B}$; consider the projection $p:E^{1,q}_2(M(A_{n-1})) \rightarrow E(\Sigma,s)$ and notice that $E(\Sigma,s)\simeq H^{q}(M(A^H_{q}))$.
We have $p(L_{\tilde{C}})=\omega_{\tilde{B}}$ and $p(L'_C)=0$ for all standard circuits $C$.
Hence, we have
\[ \sum_{C \textnormal{ standard}} a_C p(L_C) = 0.\]
If $C$ is a circuit with at least one vertex not contained in the circuit $\tilde{C}$, then $p(L_C)=0$, otherwise $p(L_C)=\omega_B$ where $B$ is the standard bamboo obtained by deleting from $C$ the two edges with end the maximal vertex.
By \Cref{lemma:basis_bamboo}, the elements $p(L_C)$ are linearly independent, so $a_{\tilde{C}}=0$ and we obtain a contradiction.
\end{proof}


\section{Representation theory}\label{sec:rep_theory}
\subsection{The action of the symmetric group}

Recall \Cref{eq:repr_top}, i.e. the description of $E^{0,n-1}_2(M(A_{n-1}))$ as representation of $\mathfrak{S}_n$.
The following formula describes the multiplicity of an irreducible representation $V_\lambda$ of $\mathfrak{S}_n$, for any $\lambda \vdash n$.
\begin{align*}
 \langle \sgn_n \Ind_{C_n}^{\mathfrak{S}_n} \zeta_n, V_\lambda \rangle_{\mathfrak{S}_n} &=
 \langle \zeta_n , \Res_{C_n}^{\mathfrak{S}_n} \sgn_n V_\lambda \rangle_{C_n} \\
 &= \frac{1}{n}\sum_{k|n} \mu \left(\frac{n}{k} \right) (-1)^{(n-1)k} \chi_{V_\lambda}(\sigma^k),
\end{align*}
where $\mu$ is the M\" obius function, $\chi$ the character of the representation and $\sigma$ any $n$-cycle in $\mathfrak{S}_n$.

More generally, the $\mathfrak{S}_n$-representation $E^{p,q}_2(M(A_{n-1}))$ was described in \cite[Theorem 3.15]{AAB14}.
As observed in \Cref{remark:sum_repr}, there is a decomposition
\[ E^{p,q}_2(M(A_{n-1})) = \bigoplus_{\lambda,s} E(\lambda,s),\]
where the sum is taken over distinct labelled partitions of $n$.


For each labelled partition $(\lambda, s)$, we choose a permutation $\sigma \in \mathfrak{S}_n$ of shape $\lambda$ and let $H(\lambda)$ be the subgroup of $\mathfrak{S}_n$ generated by the cycles of $\sigma$.
The abelian subgroup $H(\lambda)$ depends on the choice of $\sigma$ but any two of such groups are conjugated.
Now, we label arbitrarily the cycles of $\sigma$ with the function $s$, compatibly with the labels of $\lambda$.
Let $N(\lambda,s)$ be the subgroup of the centralizer of $\sigma$ generated by the permutations that exchange the cycles of $\sigma$ with the same labels (e.g. if $\sigma_i$ and $\sigma_j$ are two cycles of $\sigma$ such that $|\sigma_i|=|\sigma_j|$, $s(i)=s(j)$, and $\sigma_i=g\sigma_j$, then $g \in N(\lambda,s)$).
Obviously, $N(\lambda,s)$ is a product of symmetric groups of smaller size.
We define $Z(\lambda,s) < \mathfrak{S}_n$ as the semi-direct product $H(\lambda) \rtimes N(\lambda,s)$.

\begin{example}\label{example:label_part}
Let $(\lambda,s)$ be the following labelled partition:
\[\begin{Young}
      $xy$ & \cr
      $xy$ & \cr
      $y$\cr
      $x$\cr
      $x$\cr
\end{Young}.\]
Consider the element 
\[\sigma= \underbrace{(1,2)}_{xy} \underbrace{(3,4)}_{xy} \underbrace{(5)}_{y} \underbrace{(6)}_{x}\underbrace{(7)}_{x}\]
in $\mathfrak{S}_n$.
The subgroup $H(\lambda)\simeq \Z/2\Z \times \Z/2\Z$ is generated by $(1,2)$ and $(3,4)$, the subgroup $N(\lambda,s)\simeq \mathfrak{S}_2 \times \mathfrak{S}_2$ is generated by $(1,3)(2,4)$ and $(6,7)$, and finally $Z(\lambda,s)$ is a group isomorphic to $(\Z/2\Z \wr \mathfrak{S_2}) \times \mathfrak{S_2}$.
\end{example}

Let $\zeta_n$ be a non-trivial character of the cyclic group and $\zeta_\lambda$ the character of $H(\lambda)\simeq \Z/\lambda_1\Z \times \cdots \times \Z/\lambda_r \Z$ given by
\[ \zeta_\lambda \defeq \zeta_{\lambda_1} \boxtimes \dots \boxtimes \zeta_{\lambda_r}.\]
Let $\alpha(\lambda,s)$ be the one dimensional representation of $N(\lambda,s)\simeq \mathfrak{S}_{\mu_1} \times \cdots \times \mathfrak{S}_{\mu_l}$ defined by 
\[ \alpha(\lambda,s) \defeq \sgn_{\mu_1}^{\otimes n_1} \boxtimes \cdots \boxtimes  \sgn_{\mu_l}^{\otimes n_l},\]
where $n_i=\lambda_i+\deg s(i) +1$.
Finally, define $\xi(\lambda,s)$ as the one dimensional representation $\zeta_\lambda \boxtimes \alpha(\lambda,s)$ of $Z(\lambda,s)$.

\begin{thm}[{\cite[Theorem~3.15]{AAB14}} or {\cite[Theorem~6.3]{DPR14}}]\label{thm:weak_dec_E2}
The following equalities of $\mathfrak{S}_n$ representations hold:
\begin{gather*}
 E(\lambda,s) = \sgn_n \otimes \Ind _{Z(\lambda,s)} ^{\mathfrak{S}_n} \xi (\lambda,s),\\
 E^{p,q}_2(M(A)) = \bigoplus_{\lambda,s} E(\lambda,s),
 \end{gather*}
where the sum is taken over all labelled partition $\lambda\vdash n$ with $n-q$ blocks and $\sum_i \deg(s(i))=p$.
\end{thm}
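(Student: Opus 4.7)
The second equality is essentially tautological given \Cref{remark:sum_repr}: the decomposition $E^{p,q}_2(M(A_{n-1})) = \bigoplus_{\lambda,s} E(\lambda,s)$ already appears in the previous subsection, and $\mathfrak{S}_n$ preserves each summand because the support and shape are $\mathfrak{S}_n$-equivariant invariants. So my focus would be on the first equality.

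My plan is to realize $E(\lambda,s)$ as induced from the stabilizer of a fixed labelled set-partition and then identify the inducing representation block by block via Stanley's formula. Concretely, fix a labelled partition $(\Sigma_0, s_0)$ of $[n]$ of shape $(\lambda, s)$ with blocks $B_1,\dots,B_r$. The group $\mathfrak{S}_n$ acts transitively on the set of labelled set-partitions of type $(\lambda, s)$, and a direct inspection shows that the stabilizer of $(\Sigma_0,s_0)$ equals $K := \prod_i \mathfrak{S}(B_i) \rtimes N(\lambda,s)$. Since $\mathfrak{S}_n$ permutes the summands $E(\Sigma, s')$ of $E(\lambda, s)$ in the same way, one obtains
\[
E(\lambda, s) \simeq \Ind_K^{\mathfrak{S}_n} E(\Sigma_0, s_0).
\]

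The next step is to describe $E(\Sigma_0, s_0)$ as a $K$-representation. The crucial observation is that on the layer $W_{\Sigma_0}$ all the pullbacks $\pi_j^*(x)$, $\pi_j^*(y)$ with $j$ in a common block coincide; in particular the label factor $\pi_{\max B_i}^*(s(i))$ entering $\omega_{F,s}$ is $\mathfrak{S}(B_i)$-invariant. Hence, as a representation of the Young subgroup $\prod_i \mathfrak{S}(B_i)$,
\[
E(\Sigma_0, s_0) \simeq \bigotimes_i H^{\lambda_i - 1}\bigl(M(A^H_{B_i})\bigr),
\]
and Stanley's formula (equation \eqref{eq:repr_top}) identifies each factor with $\sgn_{\lambda_i} \otimes \Ind_{C_{\lambda_i}}^{\mathfrak{S}(B_i)} \zeta_{\lambda_i}$. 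Using that $\sgn_n$ restricts to $\bigotimes_i \sgn_{\lambda_i}$ on the Young subgroup, one gets
\[
E(\Sigma_0, s_0)\big|_{\prod_i \mathfrak{S}(B_i)} \simeq \sgn_n \otimes \Ind_{H(\lambda)}^{\prod_i \mathfrak{S}(B_i)} \zeta_\lambda.
\]

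To extend from $\prod_i \mathfrak{S}(B_i)$ to all of $K$, I would track how $N(\lambda,s)$ permutes the tensor factors corresponding to blocks of the same size and label. Because $E_2^{\bigcdot,\bigcdot}$ is graded-commutative with respect to the total degree, each transposition of two equivalent tensor factors of bidegree $(\deg s(i), \lambda_i-1)$ introduces a Koszul sign determined by the parity of $\lambda_i + \deg s(i) - 1$. Combining these with the restriction of $\sgn_n$ to $N(\lambda,s)$ yields exactly the exponent $n_j = \lambda_j + \deg s(j) + 1$ appearing in $\alpha(\lambda, s)$, so that
\[
E(\Sigma_0, s_0) \simeq \sgn_n|_K \otimes \Ind_{Z(\lambda,s)}^K \xi(\lambda,s).
\]
Applying $\Ind_K^{\mathfrak{S}_n}$ and invoking transitivity of induction through $Z(\lambda,s) < K < \mathfrak{S}_n$ then yields the claimed formula. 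The main obstacle is the sign bookkeeping in this last extension step: one has to check carefully that the Koszul signs coming from rearranging label factors $L_i$ and from permuting the top Orlik--Solomon factors $\Omega_i$ combine with $\sgn_n|_N$ to produce precisely $\alpha(\lambda, s)$, which is a delicate but purely combinatorial verification carried out (in slightly different formulations) in \cite[Theorem~3.15]{AAB14} and \cite[Theorem~6.3]{DPR14}.
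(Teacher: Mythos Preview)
The paper does not actually prove this theorem: it is stated with an attribution to \cite[Theorem~3.15]{AAB14} and \cite[Theorem~6.3]{DPR14} and is followed immediately by an example, with no proof given. So there is no ``paper's own proof'' to compare against.

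Your sketch is a reasonable outline of the argument from the cited references: induce from the stabilizer of a labelled set-partition, identify the inducing module block-by-block via Stanley's description \eqref{eq:repr_top} of the top Orlik--Solomon piece, and then track the $N(\lambda,s)$-action through Koszul signs to recover $\alpha(\lambda,s)$. This is essentially the strategy of \cite{AAB14}, and you correctly flag the sign bookkeeping as the only delicate point. Since the paper defers the proof entirely to those references, your proposal is not in conflict with anything in the paper; if anything, it supplies more detail than the paper does.
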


\begin{example}
Consider the labelled partition $(\lambda,s)$ of \Cref{example:label_part}, then
\[E(\lambda,s)= \sgn_7 \otimes \Ind_{Z(\lambda,s)}^{\mathfrak{S_7}} \xi, \]
where the characters are as shown in the following table:
\[ \begin{tabular}{c|c|c|c|c}
 & $(1,2)$ & $(3,4)$ & $(1,3)(2,4)$ & $(6,7)$ \\ 
\hline 
$\zeta$ & $-1$ & $-1$ &  &  \\ 
\hline 
$\alpha$ &  &  & $-1$ & $-1$ \\ 
\hline 
$\xi$ & $-1$ & $-1$ & $-1$ & $-1$ \\ 
\end{tabular}\]
\end{example}

\subsection{The action of \texorpdfstring{$\SL_2(\Q)$}{SL2(Q)}} \label{subsect:action_SL2}
Every homeomorphism of the elliptic curve into itself induces a pullback homomorphism in cohomology, so that the group of automorphisms of the curve acts on $H^\bigcdot(E;\Z)$.
This action factors through the group $\SL_2(\Z)$ and extends to the group $\SL_2(\Q)$ on the cohomology with rational coefficients.
We denote the irreducible representation of $\SL_2(\Q)$ of dimension $k+1$ by $\V_k$.
Thus $\SL_2(\Q)$ acts on $E^{p,q}_2(M(A_n))$ and this action is compatible with the one of $\mathfrak{S}_n$.
We have the decomposition:
\[E_2^{\bigcdot,\bigcdot}(M(A_{n-1})) \simeq E_2^{\bigcdot,\bigcdot}(M(A_{n-1})/E) \otimes (\mathds{1}_n \boxtimes \V_0 \oplus \mathds{1}_n \boxtimes \V_1 u \oplus \mathds{1}_n \boxtimes \V_0 u^2)\]
where $u$ is of bi-degree $(1,0)$ and $\mathds{1}_n$ is the trivial representation of $\mathfrak{S}_n$.

Consider $T\subset \SL_2(\Q)$, a maximal torus such that $t\cdot x= tx$ and $t\cdot y= t^{-1}y$ (under the identification of $T$ with $\Q^*$).
The irreducible representations of $T$ are $V(a)$ (one dimensional), $a \in \Z$.
The subspace $E(\lambda,s)$ is
\[ E(\lambda,s) = \sgn_n \otimes \Ind _{Z(\lambda,s)} ^{\mathfrak{S}_n} \xi (\lambda,s) \boxtimes V(a),\]
where the equality is intended as $\mathfrak{S}_n\times T$-representations and $a=|\{i \mid s(i)=x\}|-|\{i \mid s(i)=y\}|$.
Thus, for a fixed labelled partition $(\lambda,t)$, define \[E(\lambda,t,a) \defeq \bigoplus_s E(\lambda,s),\]
where $s$ runs over all labels such that $a=|\{i \mid s(i)=x\}|-|\{i \mid s(i)=y\}|$ and $\deg \circ s = t$.
Recall that the representation $\V_k$ of $\SL_2(\Q)$, once restricted to a representation of $T$, splits as
\begin{equation} \label{eq:spiegazione_per_C}
\V_k= \bigoplus_{a=0}^k V(2a-k).
\end{equation}
Since $E(\lambda,t)= \oplus_a E(\lambda,t,a) \boxtimes V(a)$, formula \eqref{eq:spiegazione_per_C} implies that
\begin{equation} \label{eq:dec_E2_SL2}
E(\lambda,t) = \bigoplus_{a=0}^p \left( E(\lambda,t,a)\ominus E(\lambda,t,a+2) \right) \boxtimes \V_a
\end{equation}
as $\mathfrak{S}_n\times \SL_2(\Q)$-representations (where $p= \sum_{i\in R} t(i)$).
\begin{remark} \label{remark:obvious_repr}
Notice that $E(\lambda,t,a)=0$ if $a \not \equiv \sum_i t(i) \; (\textnormal{mod } 2)$.
We use the standard notation $V_\mu$, $\mu \vdash n$ for the irreducible representation of $\mathfrak{S}_n$.
Each module $E^{p,q}_2(M(A_n))$, that appears in the second page of the Leray spectral sequence, decomposes in the following way as $\mathfrak{S}_n\times \SL_2(\Q)$-representations, for some $n(\mu,k) \in \N$:
\[E^{p,q}_2(M(A_n))= \bigoplus_{\mu,k} (V_\mu \boxtimes \V_k)^{\oplus n(\mu,k)}\]
where the sum is over all partitions $\mu \vdash n$, $k\equiv p \; (\textnormal{mod }2)$ and $0\leq k \leq p$.
Moreover, from eq.~\eqref{eq:dec_E2_SL2}, the virtual representation $E(\lambda,t,a)\ominus E(\lambda,t,a+2)$ is an actual representation since it coincides with the isotypical component $\V_a$ of the actual representation $E(\lambda,t)$.
\end{remark}

The discussion above can be resumed in the following theorem.
\begin{thm}\label{thm:strong_dec_E2}
The representation $E^{p,q}_2(M(A_n))$ of $\mathfrak{S}_n\times \SL_2(\Q)$ is
\[E^{p,q}_2(M(A_n)) = \bigoplus_{\substack{0 \leq a \leq p \\ |\lambda|=n-q \\ \sum_i t(i)=p }} \left( E(\lambda,t,a)\ominus E(\lambda,t,a+2) \right) \boxtimes \V_a,\]
where 
\[ E(\lambda,t,a) = \sgn_n \otimes \bigoplus_{\substack{\deg \circ s= t \\ |s^{-1}(x)|-|s^{-1}(y)|=a}}   \Ind _{Z(\lambda,s)} ^{\mathfrak{S}_n} \xi (\lambda,s).\]
Thus, the representation $\V_a$ occurs only in the small triangle of \Cref{fig:repr_Va}.
\begin{figure}
\centering
\begin{tikzpicture}
\draw (0,4) node[left]{$n-1$} -- (8,0) node[below]{$2n-2$} -- (0,0) node[align=left,below]{$(0,0)$} -- cycle;
\draw[pattern=north east lines, pattern color=blue] (2,2) node[left,above]{$(a,n-1-a)$} -- (6,0) node[below]{$2n-2-a$} -- (2,0) node[below]{$a$} -- cycle;
\draw [color=red] (0,4) -- (4,0);
\end{tikzpicture}
\caption{The representation $\V_a$ appears only in the darken triangle.} \label{fig:repr_Va}
\end{figure}
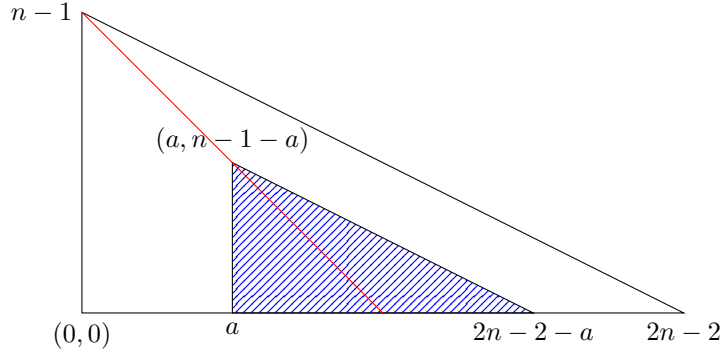
\end{thm}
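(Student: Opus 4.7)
The statement packages the discussion immediately preceding it into a single identity, so the plan is to unfold each ingredient carefully rather than prove anything substantially new.

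I would start with \Cref{thm:weak_dec_E2}, which already decomposes $E^{p,q}_2(M(A_n))$ as a direct sum $\bigoplus_{(\lambda,s)} E(\lambda,s)$ of $\mathfrak{S}_n$-representations indexed by labelled partitions with $|\lambda| = n-q$ and $\sum_i \deg s(i) = p$. The task is to refine this to an $\mathfrak{S}_n \times \SL_2(\Q)$-decomposition. Under the isomorphism of \Cref{lemma:iso_dga}, each generator of $E(\lambda,s)$ is built out of the $T$-weight vectors $\pi_j^*(x)$ (weight $+1$) and $\pi_j^*(y)$ (weight $-1$), so $E(\lambda,s)$ lies in the $T$-weight space $V(a)$ with $a = |s^{-1}(x)| - |s^{-1}(y)|$. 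Regrouping by the pair $(t,a) = (\deg \circ s,\, a)$ yields, by definition of $E(\lambda,t,a)$, the identity
\[ E(\lambda,t) = \bigoplus_a E(\lambda,t,a) \boxtimes V(a) \]
as $\mathfrak{S}_n \times T$-representations.

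The next step is to extract the $\SL_2(\Q)$-decomposition. By \Cref{remark:sum_repr}, $E(\lambda,t)$ is stable under $\mathfrak{S}_n \times \SL_2(\Q)$; complete reducibility gives $E(\lambda,t) = \bigoplus_k M_k \boxtimes \V_k$ for some $\mathfrak{S}_n$-modules $M_k$. Restricting to $T$ and using the branching rule $\V_k|_T = \bigoplus_{b=0}^k V(2b-k)$ from \eqref{eq:spiegazione_per_C} expresses the $V(a)$-isotypic component of the right-hand side as $\bigoplus_{k\ge a,\, k\equiv a \pmod 2} M_k$; comparing with the $T$-decomposition above and telescoping over $k$ identifies $M_a = E(\lambda,t,a) \ominus E(\lambda,t,a+2)$, which is precisely equation \eqref{eq:dec_E2_SL2}. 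Summing over all labelled partitions $(\lambda,t)$ with $|\lambda|=n-q$ and $\sum_i t(i) = p$ delivers the main formula.

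The support claim (the triangle in \Cref{fig:repr_Va}) is then a combinatorial check: the conditions $a \le p$ and $a \equiv p \pmod 2$ are immediate from the indexing, since each $s(i)$ with $\deg s(i) \in \{0,2\}$ contributes $0$ to $a$ and each with $\deg s(i) = 1$ contributes $\pm 1$; the further constraint $p + 2q \le 2n-2-a$ follows because a labelled partition with $n-q$ blocks carrying $\sum t(i) = p$ and torus weight $a$ must have at least $a$ weight-$1$ labels of sign $x$, which forces enough weight-$0$ slack and caps the total. The main subtlety in the plan is the telescoping step: one must verify that $E(\lambda,t,a) \ominus E(\lambda,t,a+2)$ is a genuine representation rather than a purely virtual character, which is precisely the content of \Cref{remark:obvious_repr}. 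Apart from this, the proof is a formal assembly of results already established above.
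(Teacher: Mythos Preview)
Your proposal is correct and follows essentially the same route as the paper: the theorem is presented there as a summary of the discussion in \S\ref{subsect:action_SL2}, and your argument reconstructs exactly that discussion---identify the $T$-weight of each $E(\lambda,s)$, regroup into the pieces $E(\lambda,t,a)$, apply the branching rule \eqref{eq:spiegazione_per_C} and telescope to obtain \eqref{eq:dec_E2_SL2}, then invoke \Cref{remark:obvious_repr} for the honesty of the virtual difference. The only point where you add something the paper leaves implicit is the combinatorial verification of the support triangle; your sketch there is a bit loose (the bound you want is $p+2q = 2n - 2|t^{-1}(0)| - |t^{-1}(1)| \le 2n - a$ from $a \le |t^{-1}(1)|$, with the extra $-2$ coming from the quotient by the diagonal $E$ in the figure's normalisation), so if you write this up you should make that count explicit.
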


\begin{defi}\label{def:e_k}
We denote by $e_2^{p,q}(n)_k$ (respectively $e_3^{p,q}(n)_k$) the multiplicity of $\V_k$ in $E^{p,q}_2(M(A_n))$ (resp. $E^{p,q}_3(M(A_n))$).
The multiplicity of $\V_k$ in $E^{p,q}_i(M(A_n)/E)$ is denoted by $e_i^{p,q}(n,E)_k$ for $i=2,3$.
\end{defi}

\subsection{The first row}
Now we analyse the first row $E^{\bigcdot,0}_2(M(A_n)/E)$ of the second page of the Leray spectral sequence both as graded algebra and as representation of $\mathfrak{S}_n \times \SL_2(\Q)$; from this we deduce some information about the first row $E^{\bigcdot,0}_3(M(A_n)/E)$ of the third page of the spectral sequence.

Let $V(1)_n$ be the standard representation of the symmetric group $\mathfrak{S}_n$.
We know that $E_2^{\bigcdot,0}(M(A_n)/E)$ and $ \bigwedge ^{\bigcdot} V(1)_n \boxtimes \V_1$ are isomorphic both as algebras and as representations.
A nice formula holds:
\begin{equation}\label{eq:Schur_dec}
\bigwedge^p V \boxtimes W = \bigoplus_{\lambda \vdash p} \mathbb{S}_\lambda V \boxtimes \mathbb{S}_{\lambda'} W.
\end{equation}
A proof can be found in \cite[Corollary 2.3.3]{Weyman03} or \cite[Exercise 6.11(b)]{FH91}.
In the formula above $\lambda'$ is the conjugate partition of $\lambda$ and $\mathbb{S}_\lambda$ is the Schur functor, see \cite{FH91} for a general reference.
Moreover, the dimension of $\mathbb{S}_\lambda V$ is
\[\dim \mathbb{S}_\lambda V= s_\lambda (\underbrace{1,\cdots,1}_{\dim V})= \prod_{1\leq i < j \leq \dim V} \frac{\lambda_i- \lambda_j +i-j}{i-j}, \]
where $s_\lambda$ is the Schur polynomial as proven in \cite[Theorem 6.3]{FH91}.
The representation $\mathbb{S}_{\lambda} \V_1$ is non-zero if and only if $\lambda=(a,b)$ and in this case we have the equality:
\[\mathbb{S}_{\lambda} \V_1 = \V_{a-b} .\]
\begin{example}
The isotypical component $\V_k$ of $E^{k,0}_2(M(A_n)/E)$ is $\mathbb{S}_\lambda V(1) \boxtimes \mathbb{S}_{\lambda'} W$ where $\lambda'=(k)$, therefore 
\[\mathbb{S}_\lambda V(1) \boxtimes \V_k =\bigwedge ^k V(1) \boxtimes \V_k = V(\underbrace{1,\dots,1}_{k}) \boxtimes \V_k, \]
where we adopt the common notation defined in \cite[Section 4.1]{Bibby17}.
\end{example}

\begin{lemma}\label{lemma:e2_pq(n)_k}
The numbers $e_2^{p,q}(n)_k$ and $e_2^{p,q}(n,E)_k$ can be calculated with the formulae
\begin{align*}
&e_2^{p,q}(n)_k = \stirling{n}{n-q}s_{(\frac{p+k}{2},\frac{p-k}{2})'} (\underbrace{1,\cdots,1}_{n-q}),\\ 
&e_2^{p,q}(n,E)_k = \stirling{n}{n-q}s_{(\frac{p+k}{2},\frac{p-k}{2})'} (\underbrace{1,\cdots,1}_{n-q-1}).
\end{align*}

\end{lemma}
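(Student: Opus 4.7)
The plan is to read the $\SL_2(\Q)$-multiplicities directly off the layer-by-layer description
\[ E_2^{p,q}(M(A_n)) = \bigoplus_{\rk W = q} H^p(W;\Q) \otimes H^q(M(\A_W);\Q), \]
using two facts: $\SL_2(\Q)$ acts trivially on $H^q(M(\A_W))$ (whose mixed Hodge structure is pure Tate, being the cohomology of a complex hyperplane arrangement complement), and it acts on $H^\bigcdot(W)$ through its action on $H^\bigcdot(E)$. So the multiplicity of $\V_k$ in each summand factors as (dimension of the hyperplane piece) times (multiplicity of $\V_k$ in the elliptic piece).

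For the braid elliptic arrangement, the rank-$q$ layers $W_\Sigma$ are indexed by set partitions $\Sigma$ of $[n]$ with $n-q$ blocks; each $W_\Sigma \cong E^{n-q}$, and $\A_{W_\Sigma}$ is the product over blocks $B\in\Sigma$ of the complex braid hyperplane arrangement on $\C^{|B|}$, so $\dim H^q(M(\A_{W_\Sigma})) = \prod_{B\in\Sigma}(|B|-1)!$. The classical bijection between set partitions of $[n]$ with a cyclic ordering of each block and permutations of $[n]$ then gives
\[ \sum_{|\Sigma|=n-q} \prod_{B\in\Sigma}(|B|-1)! = \stirling{n}{n-q}. \]

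It remains to compute the multiplicity of $\V_k$ in $H^p(E^{n-q})$. Since $H^1(E) = \V_1$, the equality $H^1(E^{n-q}) = \V_1 \otimes \Q^{n-q}$ holds as $\SL_2(\Q)$-representations (with $\Q^{n-q}$ carrying the trivial action), so $H^p(E^{n-q}) = \bigwedge^p(\V_1 \otimes \Q^{n-q})$. The Cauchy-type identity \eqref{eq:Schur_dec} yields
\[ \bigwedge^p(\V_1 \otimes \Q^{n-q}) = \bigoplus_{\lambda \vdash p} \mathbb{S}_\lambda(\V_1) \otimes \mathbb{S}_{\lambda'}(\Q^{n-q}), \]
and since $\dim \V_1 = 2$, only $\lambda = (a,b)$ with $a\geq b\geq 0$ contributes, via $\mathbb{S}_{(a,b)}\V_1 = \V_{a-b}$ (as recalled in the text). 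Setting $a = (p+k)/2$, $b = (p-k)/2$, the multiplicity of $\V_k$ is $\dim \mathbb{S}_{\lambda'}(\Q^{n-q}) = s_{\lambda'}(\underbrace{1,\ldots,1}_{n-q})$, automatically zero unless $p \equiv k \pmod 2$ and $p \geq k \geq 0$. Multiplying this by the Stirling sum proves the first formula.

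For the second formula, the essential arrangement $M(A_n)/E$ has the same poset of layers (the quotient map identifies $W_\Sigma$ with $W_\Sigma/E \cong E^{n-q-1}$), and the hyperplane arrangements $\A_{W_\Sigma/E}$ coincide with the original $\A_{W_\Sigma}$ under the submersion; the identical argument therefore goes through with $n-q-1$ in place of $n-q$ in the Schur polynomial. The main technical step is the application of the Cauchy--Weyl identity \eqref{eq:Schur_dec} to convert the $\SL_2$-decomposition of $\bigwedge^p(\V_1 \otimes \Q^{n-q})$ into a sum of representations $\V_{a-b}$; the Stirling identity above is elementary.
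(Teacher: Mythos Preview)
Your proof is correct and follows essentially the same approach as the paper's: both factor the $\SL_2(\Q)$-multiplicity as the Stirling number $\stirling{n}{n-q}$ (the dimension of the hyperplane-arrangement piece, on which $\SL_2$ acts trivially) times the multiplicity of $\V_k$ in $\bigwedge^p H^1(E^{n-q})$, and both extract the latter from the Cauchy identity \eqref{eq:Schur_dec} together with $\mathbb{S}_{(a,b)}\V_1=\V_{a-b}$. Your exposition is somewhat more explicit about why the Stirling number appears (via the bijection between set partitions with cyclically ordered blocks and permutations), whereas the paper simply cites the known dimension of $E_2^{0,q}$, but the substance is the same.
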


\begin{proof}
The modules $E^{p,q}_2(n)$ and $E^{p,0}_2(n-q) ^{\oplus e_2^{0,q}(n)}$ are isomorphic as $\SL_2(\Q)$-representations, hence the 
component $\V_k$ appears in the module $E_2^{p,q}(M(A_{n-1}))$ with multiplicity $e_2^{0,q}(n) e_2^{p,0}(n-q)_k$ by \Cref{def:e_k}. 
Analogously, $\V_k$ appears in the module $E_2^{p,q}(M(A_{n-1})/E)$ with multiplicity $e_2^{0,q}(n) e_2^{p,0}(n-q,E)_k$.
The number $e_2^{0,q}(n)$ is the dimension of $E_2^{0,q}(M(\A_{n-1}))$, which is isomorphic to $H^q(M(\A_{n-1}))$, and is equal to the Stirling number $\stirling{n}{n-q}$.
The decomposition of \Cref{eq:Schur_dec} applied to $W=\V_1$ and $V=V(0)_{n-q} \oplus V(1)_{n-q}$ shows that $\bigwedge^p H^1(E^{n-q})= \bigwedge^p V \boxtimes W$ and that 
\[e_2^{p,0}(n-q)_k= s_{(\frac{p+k}{2},\frac{p-k}{2})'} (\underbrace{1,\cdots,1}_{n-q}).\]
Analogously, $e_2^{p,0}(n-q,E)$ is the dimension of the module $\bigwedge^p H^1(E^{n-q}/E)\simeq \bigwedge^p H^1(E^{n-q-1})$.
The first cohomology group $H^1(E^{n-q-1})$ is the tensor product of $ V(1)_{n-q-1}$ and $\V_1$, thereby 
\[ e_2^{p,0}(n-q,E)_k = s_{(\frac{p+k}{2},\frac{p-k}{2})'} (\underbrace{1,\cdots,1}_{n-q-1}). \qedhere \]
\end{proof}

Since $e_2^{k-2,1}(n)_k=0$ (\Cref{remark:obvious_repr}), $\bigwedge^k V(1) \boxtimes \V_k$ occurs in the third page $E^{k,0}_3(M(A_n)/E)$.
The range of $\dd^{p,1}_2$ is an ideal of $E_2^{\bigcdot,0}(M(A_{n-1})/E)$ generated in degree $2$.
Let $u_i=x_i-x_{i+1}$ and $v_i=y_i-y_{i+1}$ for $i=1,\dots,n-1$ be a basis of $E^{1,0}_2(M(A_n)/E)$.
The elements $u_iv_i$ are in the range of the differential, moreover the formula
\[\dd \omega_{i,j}= \left(\sum_{k=i}^{j-1} u_i\right) \left( \sum _{k=i}^{j-1} v_i\right)\]
shows that $u_iv_j+u_jv_i$ is in the range of the differential.
A basis for the vector space $E_2^{p,0}(M(A_{n-1})/E)/\im \dd$ is given by the monomials
\[ u_{i_1}\cdots u_{i_k}v_{i_{k+1}} \cdots v_{i_p}\]
for $1 \leq i_1 < \dots <i_p \leq n-1$ and $0\leq k \leq p$, so $E^{p,0}_3(M(A_{n-1})/E)$ is of dimension $\binom{n-1}{p} (p+1)$, which is equal to the dimension of $\bigwedge^p V(1) \boxtimes \V_p$.
Finally we obtain the following theorem.
\begin{thm}\label{thm:first_row}
The $p^{\textnormal{th}}$ term in the first row of the third page of the Leray spectral sequence is
\[E^{p,0}_3(M(A_{n-1})/E)= \bigwedge^p V(1) \boxtimes \V_p=V(\underbrace{1,\cdots,1}_{p})\boxtimes \V_p\]
and is of dimension $\binom{n-1}{p} (p+1)$.
\end{thm}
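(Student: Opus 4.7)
The plan is to realize $E_3^{p,0}(M(A_{n-1})/E)$ as the cokernel of $d_2^{p-2,1}$ (the outgoing differential $d_2^{p,0}$ has target in bidegree $(p+2,-1)$, hence vanishes), and to pin down both its $\mathfrak{S}_n\times \SL_2(\Q)$-structure and its dimension through two complementary routes that must coincide.

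First I would decompose the second-page source via the Schur--Cauchy identity \eqref{eq:Schur_dec}. Since $E_2^{\bigcdot,0}(M(A_{n-1})/E)\cong \bigwedge^{\bigcdot}(V(1)_n\boxtimes\V_1)$, applying \eqref{eq:Schur_dec} with $V=V(1)_n$ and $W=\V_1$ restricts the sum to two-column partitions $\lambda=(2^b,1^{a-b})$ with $a+b=p$, $a\geq b\geq 0$, giving
\[ E_2^{p,0}(M(A_{n-1})/E) \;=\; \bigoplus_{\substack{a+b=p\\ a\geq b\geq 0}} \mathbb{S}_{(2^b,1^{a-b})}V(1)_n \,\boxtimes\, \V_{a-b}. \]
The top isotypic slice ($b=0$, $a=p$) is exactly $\bigwedge^p V(1)_n\boxtimes\V_p$, of dimension $\binom{n-1}{p}(p+1)$.

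Second, I would show that this top slice survives to the third page. By \Cref{remark:obvious_repr} applied in bidegree $(p-2,1)$, only the representations $\V_k$ with $k\leq p-2$ occur in $E_2^{p-2,1}$; in particular $\V_p$ does not appear. Therefore $\im d_2^{p-2,1}$ contains no $\V_p$-isotypic part, so $\bigwedge^p V(1)_n\boxtimes\V_p$ embeds in $E_3^{p,0}(M(A_{n-1})/E)$. Third, I would compute $\im d_2^{p-2,1}$ explicitly using the formula $\dd\omega_{i,j}=\bigl(\sum_{k=i}^{j-1}u_k\bigr)\bigl(\sum_{k=i}^{j-1}v_k\bigr)$ recorded just before the theorem, identifying it with the ideal in the exterior algebra on $u_k,v_k$ generated by the quadratic elements $u_iv_i$ and $u_iv_j+u_jv_i$. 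Reducing modulo this ideal, the monomials $u_{i_1}\cdots u_{i_k}v_{i_{k+1}}\cdots v_{i_p}$ with $1\leq i_1<\cdots <i_p\leq n-1$ and $0\leq k\leq p$ give a basis, yielding $\dim E_3^{p,0}(M(A_{n-1})/E)=\binom{n-1}{p}(p+1)$. Comparing with the dimension from Step~2 forces the embedding to be an equality.

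The main obstacle is the third step: verifying that the elements $u_iv_i$ and $u_iv_j+u_jv_i$ truly generate (and not merely lie in) the image of $d_2$, and that the listed monomials span the quotient without further collapse. The first half is handled by telescoping: $\dd\omega_{i,i+1}=u_iv_i$ recovers the diagonal relations, while $\dd\omega_{i,j}-\sum_{k=i}^{j-1}\dd\omega_{k,k+1}$ produces the off-diagonal symmetrizers $u_iv_j+u_jv_i$ inductively. The second half is a linear-algebra exercise in the exterior algebra, where the relation $u_iv_j\equiv -u_jv_i$ (modulo the image) lets one commute the $u$'s past the $v$'s up to sign and reduce every monomial to the claimed normal form.
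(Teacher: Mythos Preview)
Your proposal is correct and follows essentially the same route as the paper's argument: both show that the $\V_p$-isotypic piece $\bigwedge^p V(1)\boxtimes\V_p$ survives to $E_3$ because $\V_p$ cannot occur in $E_2^{p-2,1}$ (\Cref{remark:obvious_repr}), then compute the image of $\dd_2$ as the ideal generated by $u_iv_i$ and $u_iv_j+u_jv_i$, exhibit the same monomial basis of the quotient, and conclude by matching dimensions. Your write-up is somewhat more explicit about the telescoping that extracts the off-diagonal generators and about the normal-form reduction in the quotient, but the strategy is identical.
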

Hence we have found that $e_2^{p,0}(n)=e_2^{p,0}(n)_p= \binom{n-1}{p}$.

\subsection{Some lower bounds for Hodge numbers}

We calculate $e_3^{p,1}(n)_p$ in order to obtain a weak lower bound for the second row $E_3^{\bigcdot,1}(M(A_{n-1})/E)$ of the third page of the Leray spectral sequence.
\begin{prop}\label{prop:second_row}
The number $e_3^{p,1}(n,E)_p$ is equal to
\[e_3^{p,1}(n,E)_p= \binom{n}{p+2}\binom{p+1}{2}.\]
\end{prop}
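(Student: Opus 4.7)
The plan is to exploit that the differential $d_2$ is $(\mathfrak{S}_n\times \SL_2(\Q))$-equivariant so that it restricts to every isotypical component; the $\V_p$-component of the third-page entry $E_3^{p,1}$ will then be controlled by the $\V_p$-components of the neighbouring terms on the second page, which I can read off from \Cref{lemma:e2_pq(n)_k}, together with the rigidity of the first row furnished by \Cref{thm:first_row}.

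First I would write
\[E_3^{p,1}(M(A_{n-1})/E) = \frac{\ker\bigl(d_2^{p,1}\bigr)}{\im\bigl(d_2^{p-2,2}\bigr)}\]
and restrict to the $\V_p$-isotypical component. The incoming term vanishes already on the second page: applying \Cref{lemma:e2_pq(n)_k} with $q=2$ and $k=p$ one is asked to evaluate the Schur polynomial at $\bigl(\tfrac{(p-2)+p}{2},\tfrac{(p-2)-p}{2}\bigr)'=(p-1,-1)'$, which is not a partition, so $e_2^{p-2,2}(n,E)_p=0$ and
\[e_3^{p,1}(n,E)_p = e_2^{p,1}(n,E)_p - \dim\bigl(\im d_2^{p,1}\bigr)_{\V_p}.\]

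Second I would identify the outgoing image via \Cref{thm:first_row}. Since $E_3^{p+2,0}(n,E)=\bigwedge^{p+2}V(1)\boxtimes \V_{p+2}$ is pure of $\SL_2(\Q)$-type $\V_{p+2}$, its $\V_p$-isotypical component vanishes; moreover $E_2^{p+2,0}$ receives no $d_2$ from below so $E_3^{p+2,0}=E_2^{p+2,0}/\im d_2^{p,1}$, and it follows that $d_2^{p,1}$ surjects onto the entire $\V_p$-component of $E_2^{p+2,0}$, giving
\[\dim\bigl(\im d_2^{p,1}\bigr)_{\V_p} = e_2^{p+2,0}(n,E)_p.\]

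Finally I would evaluate both numbers through \Cref{lemma:e2_pq(n)_k}. The source uses $s_{(1^p)}=e_p$ and $\stirling{n}{n-1}=\binom{n}{2}$, yielding $e_2^{p,1}(n,E)_p = \binom{n}{2}\binom{n-2}{p}$; for the target the conjugate partition is $(p+1,1)'=(2,1^p)$ and the hook--content formula gives $e_2^{p+2,0}(n,E)_p = \frac{n(n-1)}{p+2}\binom{n-2}{p}$. Subtracting and simplifying
\[\binom{n}{2}\binom{n-2}{p}-\frac{n(n-1)}{p+2}\binom{n-2}{p} = \binom{n}{p+2}\binom{p+1}{2}\]
concludes the calculation. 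The only step that is not pure bookkeeping is the middle one: without \Cref{thm:first_row} ruling out a $\V_p$-summand in $E_3^{p+2,0}$ there is no obvious reason for $d_2^{p,1}$ to be surjective onto $(E_2^{p+2,0})_{\V_p}$, and it is this rigidity that makes the proposition computable.
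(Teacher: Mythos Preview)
Your proposal is correct and follows essentially the same argument as the paper: both use the vanishing of the $\V_p$-component in $E_2^{p-2,2}$ (you via \Cref{lemma:e2_pq(n)_k}, the paper via \Cref{thm:strong_dec_E2}) together with \Cref{thm:first_row} to reduce to $e_3^{p,1}(n,E)_p = e_2^{p,1}(n,E)_p - e_2^{p+2,0}(n,E)_p$, and then perform the identical Schur-polynomial evaluations and final simplification.
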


\begin{proof}
By \Cref{thm:first_row} we have $e_3^{p+2,0}(n,E)_p=0$ and by \Cref{thm:strong_dec_E2} we have $e_2^{p-2,2}(n,E)_p=0$.
Thus the equality
\[e_3^{p,1}(n,E)_p= e_2^{p,1}(n,E)_p - e_2^{p+2,0}(n,E)_p\]
holds.
The first addendum of the right hand side is
\begin{align*}
e_2^{p,1}(n,E)_p &= e_2^{0,1}(n,E) e_2^{p,0}(n-1,E)_p = \stirling{n}{n-1} \binom{n-2}{p}   \\
&= \binom{n}{2}\binom{n-2}{p}
\end{align*}
by \Cref{lemma:e2_pq(n)_k} and the second one is
\begin{align*}
e_2^{p+2,0}(n,E)_p &= \dim \mathbb{S}_{(p+1,1)'} V(1)_{n}\\
&=  s_{(p+1,1)'}(\underbrace{1,1 \dots, 1}_{n-1})= \frac{n(n-1)}{p+2}\binom{n-2}{p}.
\end{align*}
A straightforward calculation proves the proposition:
\begin{align*}
e_3^{p,1}(n,E)_p &= \binom{n}{2}\binom{n-2}{p}-\frac{n(n-1)}{p+2}\binom{n-2}{p} \\
&= \binom{n}{2} \binom{n-2}{p}\left( 1-\frac{2}{p+2} \right) \\
&= \binom{n}{2} \binom{n-2}{p} \frac{p}{p+2}= \binom{n}{p+2} \binom{p+1}{2}. \qedhere
\end{align*}
\end{proof} 
We give a lower bound for $e_3^{k,n-1-k}(n,E)$ by calculating explicitly $e_3^{k,n-1-k}(n)_k$.

\begin{thm}\label{thm:top_left_E3k}
For every $p$ and $q$ the followings holds
\[ e_3^{p,q}(p+q+1,E)_p= \stirling{ p+q}{p}. \]
Equivalently, for $n\geq k+1$, the formula
\[ e_3^{k,n-1-k}(n,E)_k = \stirling{ n-1 }{ k} \]
holds.
\end{thm}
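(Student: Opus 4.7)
The plan is to restrict to the $\V_k$-isotypical component along the boundary diagonal $p+2q = 2n-2-k$ of the second page, observe that it forms an almost-acyclic cochain complex via the vanishing of higher cohomology, and then read off $e_3^{k,n-1-k}(n,E)_k$ as an alternating sum of dimensions that reduces to a Stirling-number identity.

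First, by the formula of \Cref{lemma:e2_pq(n)_k}, the multiplicity $e_2^{p,q}(n,E)_k$ vanishes whenever $p<k$: the partition $\bigl(\tfrac{p+k}{2},\tfrac{p-k}{2}\bigr)'$ is then ill-defined and the corresponding Schur polynomial evaluates to zero. Therefore the incoming differential $d_2\colon E_2^{k-2,n-k}(n,E)\to E_2^{k,n-1-k}(n,E)$ restricts to zero on the $\V_k$-isotypical component, giving
\[ e_3^{k,n-1-k}(n,E)_k = \dim \bigl(\ker d_2^{k,n-1-k}\bigr)_{\V_k}. \]

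Second, since $M(A_n)/E$ is essential of rank $n-1$, \Cref{thm:vanishing} (and the remark following it) yields $E_3^{p,q}(n,E)=0$ whenever $p+q>n-1$. Because $d_2$ has bi-degree $(2,-1)$, the $\V_k$-isotypical parts at the lattice points on the line $p+2q=2n-2-k$ assemble into a cochain complex
\[ E_2^{k,n-1-k}(n,E)_{\V_k} \to E_2^{k+2,n-2-k}(n,E)_{\V_k} \to \cdots \to E_2^{2n-2-k,0}(n,E)_{\V_k}. \]
For each $i\geq 1$, its cohomology at the $i$-th term is the $\V_k$-isotypical part of $E_3^{k+2i,n-1-k-i}(n,E)$, which vanishes because $p+q=n-1+i>n-1$; only at $i=0$ does the cohomology survive, equal to $E_3^{k,n-1-k}(n,E)_{\V_k}$. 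Taking Euler characteristic yields
\[ e_3^{k,n-1-k}(n,E)_k = \sum_{i=0}^{n-1-k}(-1)^i\, e_2^{k+2i,n-1-k-i}(n,E)_k. \]

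Third, I apply \Cref{lemma:e2_pq(n)_k} to each summand. The conjugate of $(k+i,i)$ is $(2^i,1^k)$, and a short hook-content computation gives $s_{(2^i,1^k)}(\underbrace{1,\ldots,1}_{k+i})=\binom{k+i}{i}$. Hence the sum reduces to
\[ \sum_{i=0}^{n-1-k} (-1)^i \stirling{n}{k+1+i}\binom{k+i}{i}, \]
and the remaining task is to identify this with $\stirling{n-1}{k}$.

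I would verify this final Stirling identity by generating functions: starting from $P(y)=(y+1)(y+2)\cdots(y+n-1)=\sum_{m\geq 0}\stirling{n}{m+1} y^m$ and substituting $y=-1-z$, the direct factorisation gives $P(-1-z)=(-z)(1-z)(2-z)\cdots(n-2-z)$, which equals $\sum_k(-1)^k \stirling{n-1}{k} z^k$ by the signed Stirling expansion of $z(z-1)\cdots(z-n+2)$; expanding $(-1-z)^m$ binomially on the other side and comparing coefficients of $z^k$ yields the identity after reindexing $m=k+i$. The structural part of the proof is essentially forced by the cited results; the main obstacle is this final generating-function bookkeeping, where care must be taken to track the signs in passing between signed and unsigned Stirling numbers of the first kind.
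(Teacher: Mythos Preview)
Your proof is correct and follows essentially the same route as the paper's: both restrict to the $\V_k$-isotypical component, use \Cref{thm:vanishing} to obtain exactness along the diagonal, take the alternating sum of the $e_2$-multiplicities from \Cref{lemma:e2_pq(n)_k}, and reduce to the same Stirling identity. The only difference is that the paper cites the identity $\stirling{n}{m}=\sum_{k}(-1)^{m-k}\stirling{n+1}{k+1}\binom{k}{m}$ from \emph{Concrete Mathematics}, whereas you supply a self-contained generating-function proof via the substitution $y\mapsto -1-z$ in $\prod_{j=1}^{n-1}(y+j)$; your argument for this step is correct and arguably more transparent.
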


\begin{proof}
We prove the second statement.
\Cref{thm:vanishing} shows that the following long sequence is exact:
\[ E_2^{k,n-k-1} \rightarrow E_2^{k+2,n-k-2} \rightarrow \cdots \rightarrow E_2^{2n-k,0} \rightarrow 0.\]
The isotypical component $\V_k$ appears in the kernel of the first map with the following multiplicity:
\begin{align*}
e_3^{k,n-1-k}(n,E)_k & = \sum_{i=0}^{n-1-k}(-1)^i e_2^{k+2i,n-1-k-i}(n)_k \\
&= \sum_{i=0}^{n-1-k}(-1)^i \stirling{n}{k+i+1} s_{(k+i,i)'} (\underbrace{1,1 \dots, 1}_{k+i})\\
&= \sum_{i=0}^{n-1-k}(-1)^i \stirling{n}{k+i+1} \binom{k+i}{k}\\
&= \sum_{j=k}^{n-1}(-1)^{j+k} \stirling{n}{j+1} \binom{j}{k}.
\end{align*}
Finally, the formula $\stirling{n}{m}=\sum_{k=m}^n (-1)^{m-k}\stirling{n+1}{k+1} \binom{k}{m} $ of \cite[Formula 6.18 of Table 265]{Concrete_Mathematics} completes the proof.
\end{proof}

\begin{question}
Does the equality $e_3^{k,q}(n,E)_k=\stirling{p+q}{p} \binom{n}{p+q+1}$ hold?
\end{question}
\Cref{thm:top_left_E3k} gives a positive answer to the above question in the case $n=p+q+1$. Another consequence of \Cref{thm:top_left_E3k} for $k=p=1$ is a positive answer to \Cref{claim:E3_1q} in the particular case $n=q+2$.

\begin{remark}\label{rem:iso_rep_res}
The map 
\[p:E^{1,n-2}_2(M(A_{n-1})) \rightarrow E(\Sigma,s)\simeq H^{n-2}(M(A^H_{n-2}))\]
of \Cref{thm:independency} does not depend on the circuit $\tilde{C}$ and induces a map 
\[E^{1,n-2}_3(M(A_{n-1})) \rightarrow H^{n-2}(M(A^H_{n-2}))\otimes \V_1.\]
This map is an isomorphism of $\mathfrak{S}_{n-1} \times \SL_2(\Q)$-representation, where the action of the group $\mathfrak{S}_{n-1}$ on $E^{1,n-2}_2(M(A_{n-1}))$ is the restriction of the natural action of $\mathfrak{S}_{n}$.
\end{remark}

Recall the action of $\mathfrak{S}_n$ on $H^{n-2}(M(A^H_{n-1}))$ defined in \cite{Gaiffi96}.

\begin{question}
Is the map $E^{1,n-2}_2(M(A_{n-1})) \rightarrow H^{n-2}(M(A^H_{n-2}))\boxtimes \V_1$ an isomorphism of $\mathfrak{S}_n$-modules?
\end{question}

\section{\texorpdfstring{$1$}{1}-formality for graphic arrangements} \label{sec:formality}
The configuration spaces of points on an elliptic curve are not always formal, as shown in \cite{Bezrukavnikov} and \cite{DPS}.
In \cite[Theorem 4.16]{BibbyPhD} it is proven that a chordal graphic arrangement (with at least one cycle) has a non-formal complement.

We will show that the complement of a graphic arrangement is $1$-formal if and only if it has no cycles of length $3$.
This result is a particular case of \cite[Theorem~1.2]{BMPP2017}, but the proof is completely different.

\subsection{Formality result}
Let $G=(\mathcal{V}, \mathcal{E})$ be a finite graph without loops and multiple edges.

\begin{defi}
A \textit{graphic elliptic arrangement} is the arrangement in $E^{|\mathcal{V}|}$ given by the divisors
\[ D_{i,j} = \{ P \in E^{|\mathcal{V}|} \mid P_i=P_j\}\]
for each edge $(i,j) \in \mathcal{E}$.
\end{defi}

\begin{example}
The braid arrangement $A_n$ is the graphic arrangement associated with the complete graph over $n$ vertexes.
\end{example}

Let $(E,d)$ and $(E',d')$ be two commutative differential graded algebras (briefly, CDGA).
\begin{defi}
A $q$\textit{-isomorphism} $\varphi:E \rightarrow E'$ is a morphism of algebras and of cochain complexes such that the induced morphism in cohomology $\varphi^*: H^k(E) \rightarrow H^k(E')$ is an isomorphism for $k\leq q$ and an injection for $k=q+1$.

The CDGA $E$ and $E'$ are $q$\textit{-equivalent} if there exists a zig-zag of $q$-isomorphisms connecting $E$ to $E'$.
Moreover, the CDGA $E$ is $q$\textit{-formal} if $E$ is $q$-equivalent to its cohomology $(H^\bigcdot(E),0)$.

A finite type CW-complex $X$ is $q$\textit{-formal} if its Sullivan model (defined in \cite{Sullivan}) is $q$-formal.
\end{defi}

Notice that if $X$ is of finite dimension $n$, then formality and $n$-formality coincide.
Moreover, the Sullivan model is $q$-equivalent to the second page of the Leray filtration for all $q$, so from now on we will work on the CDGA $E_2^{\bigcdot,\bigcdot}(M(\A))$ (endowed with the total degree) and discuss the formality of $M(\A)$.

Let $G$ be a graph over $n$ vertexes and $M(G)$ the complement of the graphic arrangement defined by $G$.
Consider the inclusion $M(A_n) \hookrightarrow M(G)$ and the corresponding injection in the second pages of Leray spectral sequences
\[ E_2^{\bigcdot,\bigcdot}(M(G)) \hookrightarrow E_2^{\bigcdot,\bigcdot}(M(A_n)).\]
This map induces an injection of $E_3^{p,q}(M(G)) \hookrightarrow E_3^{p,q}(M(A_n))$ only when $p=0,1$.

\begin{lemma}\label{lemma:vanishing_graph}
Let $G$ be a graph without cycles of length $3$.
Then the vector spaces $E_3^{0,1}(M(G))$, $E_3^{0,2}(M(G))$ and $E_3^{1,1}(M(G))$ are zero.
\end{lemma}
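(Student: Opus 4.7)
The plan is to handle the two $p=0$ vanishings uniformly by pullback from the full braid arrangement, and to treat the more substantial case $E_3^{1,1}$ by a direct computation of the differential that isolates the contribution of each potential triangle.

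For $E_3^{0,1}(M(G))$ and $E_3^{0,2}(M(G))$: combine the injection $E_3^{0,q}(M(G))\hookrightarrow E_3^{0,q}(M(A_n))$ recalled just above the lemma with \Cref{thm:AAB14}, which says $E_3^{0,q}(M(A_n))=0$ for all $q>0$, to conclude immediately.

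For $E_3^{1,1}(M(G))$ the argument is more substantial. Since $E_2^{-1,2}=0$, it suffices to show $\ker d_2^{1,1}=0$. I would parametrize a general element of $E_2^{1,1}(M(G))$ as $\eta=\sum_{e\in\mathcal{E}}\alpha_e\otimes e_e$, with $\alpha_e\in H^1(D_e)$ and $e_e$ the generator of $H^1(M(\A_{D_e}))$, choose lifts $\tilde\alpha_e\in H^1(E^n)$ of $\alpha_e$ (unique modulo the $2$-dimensional kernel of $i_e^*$, spanned by $x_a-x_b$ and $y_a-y_b$ for $e=(a,b)$, where $x_i=\pr_i^*(x)$ and $y_i=\pr_i^*(y)$), and use the Gysin description of $d_2$ from \Cref{subsect:model} to get
\[ d_2\eta=\sum_{e\in\mathcal{E}}[D_e]\,\tilde\alpha_e\in H^3(E^n),\qquad [D_e]=(x_a-x_b)(y_a-y_b). \]
The key tool is the vertex-support multigrading on $H^*(E^n)\cong\bigwedge(x_i,y_i\mid i\in[n])$, in which a monomial is graded by the set of indices appearing. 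Each summand $[D_e]\,\tilde\alpha_e$ has vertex-support either $\{a,b\}$ (from the $a,b$-components of $\tilde\alpha_e$) or $\{a,b,c\}$ with $c\ne a,b$ (from the $c$-component), so the equation $d_2\eta=0$ decouples into one equation per subset $V\subseteq[n]$.

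At a triple $V=\{a,b,c\}$ the only contributions come from edges of $G$ inside $V$, each paired with the $\tilde\alpha$-component at the remaining vertex. A short expansion of $[D_{uv}]\,x_w$ and $[D_{uv}]\,y_w$ (with $\{u,v,w\}=V$) in the standard basis of the support-$V$ part of $H^3(E^n)$ shows that the resulting linear system in the at most six unknowns (two per potential edge of $V$) is injective whenever fewer than three of the edges of $V$ lie in $\mathcal{E}$; a $2$-dimensional kernel appears exactly when $V$ is a triangle of $G$, and that kernel is spanned by the analogues of $L_C$ and $L'_C$ from \Cref{thm:independency}. Under the triangle-free hypothesis this forces every $\tilde\alpha_e$-component at a vertex outside $\{a,b\}$ to vanish, and an analogous analysis at the pair supports $V=\{a,b\}$ shows the surviving $\{a,b\}$-part of $\tilde\alpha_e$ lies in $\ker i_e^*$. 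Hence $\alpha_e=0$ in $H^1(D_e)$ for every edge, so $\ker d_2^{1,1}=0$.

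The main obstacle is the rank computation at the triples $V$: expanding the products $[D_{uv}]\,x_w$ and $[D_{uv}]\,y_w$ and verifying the rank claim is elementary but sign-sensitive. The cleanest route is to check that any two of the three potential edge-contributions to the support-$V$ piece of $H^3(E^n)$ are linearly independent, which reduces to a handful of identities in the exterior algebra $\bigwedge(x_a,y_a,x_b,y_b,x_c,y_c)$. Once that is in hand, the decoupling of supports delivers the desired vanishing without further effort.
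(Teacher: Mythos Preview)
Your argument is correct, and for the two $p=0$ vanishings it coincides with the paper's (injection into $E_3^{0,q}(M(A_n))$ plus \Cref{thm:AAB14}). For $E_3^{1,1}$ you take a genuinely different route. The paper embeds $E_2^{1,1}(M(G))$ into $E_2^{1,1}(M(A_n))$, invokes \Cref{thm:independency} together with \Cref{prop:second_row} to conclude that $\{L_C,L'_C\}_C$ (over length-$3$ circuits) is a \emph{basis} of $\ker d_2^{1,1}$ in the braid case, and then observes that for each triangle $C$ at least one edge is absent from $G$, so the corresponding monomial $x_k\omega_{i,j}$ forces the coefficient $a_C$ (resp.\ $a'_C$) to vanish. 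Your direct support-by-support analysis in $H^3(E^n)$ avoids those two earlier results entirely; in effect, when specialised to $G=K_n$, it reproves that the kernel is spanned by the triangle classes. The paper's proof is shorter once the machinery is in place; yours is more elementary and self-contained.

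One wording issue to tighten: the sentence ``each summand $[D_e]\,\tilde\alpha_e$ has vertex-support either $\{a,b\}$ or $\{a,b,c\}$'' is not literally true, since for $c\notin\{a,b\}$ the product $[D_e]\,x_c=(x_a-x_b)(y_a-y_b)x_c$ also has components of support $\{a,c\}$ and $\{b,c\}$ (coming from the ``diagonal'' terms $x_ay_a$ and $x_by_b$ of $[D_e]$). This does not damage your plan: the projection of $d_2\eta$ to a \emph{three}-element support $V$ still receives contributions only from edges inside $V$ via the remaining-vertex component of $\tilde\alpha_e$, exactly as you use it; and once the triple analysis kills those third-vertex components, the spurious $\{a,c\}$ and $\{b,c\}$ pieces vanish too, leaving each $[D_e]\tilde\alpha_e$ supported on the pair $\{a,b\}$ and the final step goes through. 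Just rephrase the decoupling statement to say that the support-$V$ equation for $|V|=3$ involves only those contributions, rather than that the whole summand has that support.
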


\begin{proof}
The first two vector spaces vanish for any graphic elliptic arrangement.
We are left to prove that $E_3^{1,1}(M(G))=0$, or equivalently that
\[ E_2^{1,1}(M(G)) \bigcap \ker \dd_2^{1,1} =0 \quad \textnormal{in } E_2^{1,1}(M(A_n)).\]
By \Cref{thm:independency} and \Cref{prop:second_row}, a basis of the kernel $\ker \dd_2^{1,1}$ is given by $\{L_C,L_C'\}_C$ where $C$ runs over all (standard) circuits of length $3$.
Suppose that the element
\[z \defeq \sum_C a_C L_C + a'_C L'_C \in \ker \dd_2^{1,1},\] where $a_C, a'_C \in \Q$, belongs to $E_2^{1,1}(M(G))$.

For each cycle $C=\{(i,j),(j,k),(k,i)\}$ there is an edge of $C$ (suppose without loosing of generality $(i,j)$) does not lie in $G$.
Thus, the monomial $x_k \omega_{i,j}$ in $z$ has coefficient $\pm a_C$ and it must be zero since $z$ belongs to $E_2^{1,1}(M(G))$.
An analogous result holds for $a'_C$ and by arbitrariness of $C$ we conclude that $z=0$.
So $E_2^{1,1}(M(G)) \bigcap \ker \dd_2^{1,1} =0$ and \Cref{lemma:vanishing_graph} follows.
\end{proof}

\begin{lemma}\label{lemma:no_3_1_formal}
Let $G$ be a graph without cycles of length $3$.
Then the topological space $M(G)$ is $1$-formal.
\end{lemma}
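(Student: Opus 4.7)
The plan is to display the CDGA $(E_2^{\bigcdot,\bigcdot}(M(G)),\dd)$ as $1$-equivalent to its cohomology through the auxiliary differential-free CDGA
\[ \tilde E \;:=\; E_2^{\bigcdot,0}(M(G))\,\big/\,I,\]
where $I\subset E_2^{\bigcdot,0}(M(G))=H^\bigcdot(E^n;\Q)$ is the ideal generated by $\mathrm{im}\,\dd^{0,1}$; explicitly, $I$ is generated by the products $\pi_e^*(x)\pi_e^*(y)$ for edges $e\in\mathcal{E}$. I will construct a zig-zag
\[ E_2^{\bigcdot,\bigcdot}(M(G)) \xrightarrow{\;\rho\;} \tilde E \xrightarrow{\;\psi\;} H^\bigcdot(M(G);\Q) \]
and verify that both $\rho$ and $\psi$ are $1$-isomorphisms of CDGAs.

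Define $\rho$ as the vector-space projection onto $E_2^{\bigcdot,0}$ composed with the quotient $E_2^{\bigcdot,0}\twoheadrightarrow\tilde E$, so that $E_2^{\bigcdot,q}$ is sent to zero for every $q\geq 1$. Algebra compatibility is automatic: any product involving a factor of positive weight remains in positive weight, which is killed on both sides. The chain-map property reduces to showing $\dd(E_2^{\bigcdot,1})\subseteq I$: because all intersections of divisors of a graphic arrangement are connected, \Cref{lemma:iso_dga} together with \eqref{eq:prod_2} gives $E_2^{\bigcdot,1}=E_2^{\bigcdot,0}\cdot E_2^{0,1}$, and the Leibniz rule then yields $\dd(a\,\omega_e)=\pm a\,\dd\omega_e\in E_2^{\bigcdot,0}\cdot\mathrm{im}\,\dd^{0,1}\subseteq I$; for $q\geq 2$ the differential lands in weight $q-1\geq 1$ and is killed trivially. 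The map $\psi$ sends $[\alpha]_I$ to the cohomology class $[\alpha]\in H^\bigcdot(M(G);\Q)$, which is well defined since the generators of $I$ are coboundaries in $E_2^{\bigcdot,\bigcdot}$, and is an algebra map because the product in $\tilde E$ is induced from the product in $E_2^{\bigcdot,0}\subset E_2^{\bigcdot,\bigcdot}$.

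That both $\rho$ and $\psi$ are $1$-isomorphisms is now read off from \Cref{lemma:vanishing_graph}. Indeed, the vanishings $E_3^{0,1}(M(G))=E_3^{0,2}(M(G))=E_3^{1,1}(M(G))=0$ force the three differentials $\dd^{0,1},\dd^{0,2},\dd^{1,1}$ all to be injective (there is no incoming $\dd$ into the respective bidegrees), so that $H^1(M(G))=E_2^{1,0}$ and $H^2(M(G))=E_2^{2,0}/\mathrm{im}\,\dd^{0,1}$. Since $I$ has no component in degrees $0,1$ and $I^2=\mathrm{im}\,\dd^{0,1}$, both induced maps $\rho^{\ast}$ and $\psi^{\ast}$ are the identity in cohomological degrees $\leq 2$, which is more than the $1$-isomorphism condition requires. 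The main delicate step is the chain-map property of $\rho$, i.e.\ the inclusion $\dd(E_2^{\bigcdot,1})\subseteq I$; this relies on the generation of $E_2^{\bigcdot,\bigcdot}$ by weights $0$ and $1$, after which the rest of the argument is a routine cohomology computation based on the vanishings provided by \Cref{lemma:vanishing_graph}.
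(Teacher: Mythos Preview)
Your argument is correct and follows essentially the same route as the paper. Your intermediate algebra $\tilde E = E_2^{\bigcdot,0}(M(G))/\langle \operatorname{im}\dd^{0,1}\rangle$ coincides with the paper's quotient $Q = E_2^{\bigcdot,\bigcdot}(M(G))/I$, where $I$ is the ideal generated by all of $\operatorname{im}\dd$ together with the positive-weight part; the Leibniz computation you carry out is exactly what identifies the two descriptions. The only cosmetic difference is the second leg of the zig-zag: the paper maps $E_3^{\bigcdot,\bigcdot}(M(G))\to Q$ via $\ker\dd/\operatorname{im}\dd \hookrightarrow E_2/\operatorname{im}\dd \twoheadrightarrow Q$, whereas you map $\tilde E\to H^\bigcdot(M(G))$ in the opposite direction; in degrees $\leq 2$ these are mutually inverse, so both establish the same $1$-equivalence. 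One small remark: \Cref{lemma:iso_dga} assumes the arrangement is essential, which graphic arrangements are not, but you only use the generation statement $E_2^{\bigcdot,1}=E_2^{\bigcdot,0}\cdot E_2^{0,1}$, which follows directly from the definition of $E_2$ and unimodularity without appealing to that lemma.
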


\begin{proof}
It is enough to prove that $E_2^{\bigcdot,\bigcdot}(M(G))$ is $1$-formal. Consider the ideal $I$ of $E_2^{\bigcdot,\bigcdot}(M(G))$ generated by $E_2^{p,q}(M(G))$ for $q>0$ and $\im \dd_2$.
This ideal is stable with respect to $\dd_2$, so that the algebra $Q := E_2^{\bigcdot,\bigcdot}(M(G))/I$ is a CDGA with trivial differential.
Moreover, the projection $p: E_2^{\bigcdot,\bigcdot}(M(G)) \rightarrow Q$ is a morphism of CDGA and by \Cref{lemma:vanishing_graph} the map $p$ is a $1$-isomorphism.

Consider now the map
\[\varphi: E_3^{\bigcdot,\bigcdot}(M(G)) = \faktor{\ker \dd}{\im \dd} \hookrightarrow \faktor{E_2^{\bigcdot,\bigcdot}(M(G))}{\im \dd} \twoheadrightarrow Q,\]
it is a $1$-isomorphism and therefore $(E_2^{\bigcdot,\bigcdot}(M(G)),\dd)$ and $(E_3^{\bigcdot,\bigcdot}(M(G)),0)$ are $1$-equivalent.
\end{proof}

Let $(E,\dd)$ be a CDGA; for each $x \in H^1(E,\dd)$ define a new differential $\dd_x: E \rightarrow E$ by
\[\dd_x(e)= xe + \dd (e). \]
An easy check shows that $(E,\dd_x)$ is a CDGA.

\begin{defi}
The first resonance variety of a CDGA $(E,\dd)$ is
\[ \R^1(E) \defeq \{ x \in H^1(E,\dd) \mid H^1(E,\dd_x)\neq 0\}.\]
\end{defi}
Notice that if $\dd=0$, $\dd_x$ is the left multiplication by $x$.

\begin{lemma}\label{lemma:1_formal_no_3}
Let $G$ be a graph with a cycle of length $3$, then the resonance variety $\R^1(E_2^{\bigcdot,\bigcdot}(M(G)))$ is strictly contained in $\R^1(E_3^{\bigcdot,\bigcdot}(M(G)))$.
\end{lemma}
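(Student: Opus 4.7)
The plan is to exhibit an element of $H^1$ that lies in $\R^1(E_3^{\bigcdot,\bigcdot}(M(G))) \setminus \R^1(E_2^{\bigcdot,\bigcdot}(M(G)))$, constructed from the vertices of the given $3$-cycle. Throughout I write $x_i = \pr_i^*(x)$ and $y_i = \pr_i^*(y)$ as in \Cref{subsect:action_SL2}, and set $X_e = x_a - x_b$, $Y_e = y_a - y_b$ for $e = (a,b)\in E(G)$. The identification $H^1(E_2,\dd) = E_3^{1,0} = H^1(E^n)$ holds because the classes $\dd\omega_e = X_e Y_e$ are linearly independent in $H^2(E^n)$, so $E_3^{0,1}=0$. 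The ambient inclusion $\R^1(E_2)\subseteq \R^1(E_3)$ is immediate: for $x = \alpha X_e + \beta Y_e$ the element $y=\beta X_e - \alpha Y_e$ satisfies $xy = -(\alpha^2+\beta^2)\,\dd\omega_e\in\im\dd$, hence $x\in\R^1(E_3)$.

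Now pick a $3$-cycle $(v_1,v_2,v_3)$ in $G$ with edges $e_{ab}=(v_a,v_b)\in E(G)$, and set
\[x \defeq x_{v_1}+x_{v_2}-2\,x_{v_3},\qquad y \defeq y_{v_1}+y_{v_2}-2\,y_{v_3}.\]
A direct expansion in $H^2(E^n)$ yields
\[xy \;=\; -\dd\omega_{e_{12}}+2\,\dd\omega_{e_{23}}+2\,\dd\omega_{e_{13}}\;\in\;\im\dd,\]
because all three edges belong to $G$. Therefore $xy=0$ in $E_3^{2,0}$, and $y\notin \Q x$ because $x$ and $y$ lie in complementary Hodge components of $H^1(E^n)$; this proves $x\in \R^1(E_3^{\bigcdot,\bigcdot}(M(G)))$.

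To prove $x\notin \R^1(E_2^{\bigcdot,\bigcdot}(M(G)))$, suppose by contradiction that $\eta+\omega\in E_2^{1,0}\oplus E_2^{0,1}$ satisfies $\eta+\omega\notin \Q x$ and $\dd_x(\eta+\omega)=0$. Splitting by bidegree,
\[x\omega = 0\ \text{in } E_2^{1,1},\qquad x\eta+\dd\omega = 0\ \text{in } E_2^{2,0}.\]
Writing $\omega=\sum_{e\in E(G)}a_e\omega_e$, the first equation reads $a_e\,(x|_{D_e})\otimes \omega_e = 0$ inside each summand of $E_2^{1,1}=\bigoplus_e H^1(D_e)\otimes H^1(\A_{D_e})$, forcing $a_e = 0$ whenever $x|_{D_e}\neq 0$. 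A direct check identifies the kernel of the restriction $H^1(E^n)\to H^1(D_e)$ with $\Q X_e+\Q Y_e$. But $x$ has nonzero $x$-coefficients at the three distinct vertices $v_1,v_2,v_3$, while every element of $\Q X_e+\Q Y_e$ has its $x$-part supported on at most the two endpoints of $e$. Hence $x|_{D_e}\neq 0$ for every edge $e$, so $\omega=0$; the remaining equation $x\eta=0$ in $\Lambda^2 H^1(E^n)$ then forces $\eta\in \Q x$, contradicting $\eta+\omega=\eta\notin \Q x$.

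The main obstacle is unpacking the bidegree-$(1,1)$ equation $x\omega=0$: one must use the direct-sum structure of $E_2^{1,1}$ indexed by edges together with the computation of the kernel of $H^1(E^n)\to H^1(D_e)$ to conclude that the three-vertex support of $x$ is incompatible with any nonzero choice of $\omega$. Once this is in hand, all other verifications reduce to short exterior-algebra computations.
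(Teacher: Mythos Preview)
Your proof is correct in all its computations, and the approach is close in spirit to the paper's while being more concrete: the paper determines $\R^1(E_2)$ completely as $\bigcup_{e\in\mathcal E}\langle X_e,Y_e\rangle$ and then shows that $\R^1(E_3)$ contains a whole $3$-dimensional quadric $\{an\,u_1+am\,u_2+bn\,v_1+bm\,v_2\}$ built from the cycle, whereas you pick one specific point of that quadric (your $x$ is $-u_1+2u_2$ in the paper's notation, with the same witness $y=-v_1+2v_2$) and verify directly that it lies in $\R^1(E_3)\setminus\R^1(E_2)$. Your verification that $x\notin\R^1(E_2)$ via the bidegree splitting and the edge-by-edge analysis of $E_2^{1,1}$ is exactly the same mechanism the paper uses.

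There is one genuine gap in the write-up. You assert that ``the ambient inclusion $\R^1(E_2)\subseteq\R^1(E_3)$ is immediate'' and then check it only for $x=\alpha X_e+\beta Y_e$; but at that point you have not shown that every element of $\R^1(E_2)$ has this form. The fix is already implicit in your third paragraph: running that argument for an \emph{arbitrary} $z\in H^1(E^n)$ shows that a witness $\eta+\omega$ forces $z\in\langle X_e,Y_e\rangle$ for some edge $e$ (otherwise $\omega=0$ and $\eta\in\Q z$), which is precisely the description of $\R^1(E_2)$ you need. Alternatively, one can argue directly: if $z\in\R^1(E_2)$ with witness $\eta+\omega$, then $z\eta=-\dd\omega\in\im\dd$, and $\eta\notin\Q z$ because otherwise $\dd\omega=0$ forces $\omega=0$ by the injectivity of $\dd$ on $E_2^{0,1}$ that you already noted; hence $\eta$ witnesses $z\in\R^1(E_3)$. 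Either way, a sentence is needed; ``immediate'' is not enough. (The paper's own proof is equally terse on this point.)
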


\begin{proof}
By \Cref{thm:AAB14} we have $H^1(E_2^{\bigcdot,\bigcdot}(M(G)), \dd) = E_2^{1,0}(M(G))=H^1(E^n)$.
Let $z=\sum_{i=1}^n a_ix_i+ b_i y_i$ be an arbitrary element of $H^1(E^n)$.
For each edge $(j,k)\in \mathcal{E}$ we have
\[ \dd_z(\omega_{j,k})= z\omega_{j,k} + (x_j-x_k)(y_j-y_k).\]
Since $\dd_z: E^{1,0}_2 \bigoplus E^{0,1}_2 \rightarrow E^{2,0}_2 \bigoplus E^{1,1}_2$ is upper triangular (i.e. preserves the Leray filtration) we consider the graded map $\gr \dd_z$.
Its kernel is $z\Q$ if $z \neq a(x_j-x_k)+b(y_j-y_k)$ for $a,b \in \Q$ and $(i,j) \in \mathcal{E}$.
Otherwise, the equations $\dd_z(b\omega_{j,k}-y_j+y_k)=0$ and $\dd_z(a\omega_{j,k}+x_j-x_k)=0$ ensure that $H^1(E_2^{\bigcdot,\bigcdot}(M(G)), \dd_z)$ is non-zero.
Therefore, $\R^1(E_2^{\bigcdot,\bigcdot}(M(G)))$ coincide with
\[ \bigcup_{(j,k) \in \mathcal{E}} \langle x_j-x_k, y_j-y_k \rangle .\]

Let $\{(i,j),(j,k),(k,i)\}$ be a circuit of the graph $G$, the image of $\dd_2^{0,1}$ contains 
\begin{align*}
& (x_i-x_j)(y_i-y_j),\\
& (x_i-x_k)(y_i-y_k),\\
& (x_j-x_k)(y_j-y_k).
\end{align*}
For sake of notation we call $u_1=x_i-x_j$, $u_2=x_i-x_k$, $v_1=y_i-y_j$ and $v_2=y_i-y_k$.
In $E_3^{2,0}(M(G))$ the relations $u_1v_1=0$, $u_2v_2=0$ and $u_1v_2+u_2v_1=0$ hold and hence 
\[(anu_1+amu_2+bnv_1+bmv_2)(nv_1+mv_2) =0 \]
in $E_3^{2,0}(M(G))$, for $a,b,n,m \in \Q$.
The above equation implies that the resonance variety $\R^1(E_3^{\bigcdot,\bigcdot}(M(G)))$ contains a $3$-dimensional quadric and it so contains strictly the $2$-dimensional (singular) variety $\R^1(E_2^{\bigcdot,\bigcdot}(M(G)))$.
\end{proof}

\begin{thm}
A graphic elliptic arrangement $M(G)$ is $1$-formal if and only if the graph $G$ does not contain cycles of length $3$.
\end{thm}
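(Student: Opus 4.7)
The plan is to combine the two preceding lemmas via the standard fact that the first resonance variety is a $1$-homotopy invariant of a CDGA (it depends only on $H^0$, $H^1$, and the cup product $H^1\otimes H^1\to H^2$, so any zig-zag of $1$-isomorphisms preserves it). The ``if'' direction is then immediate from \Cref{lemma:no_3_1_formal}, which already exhibits a zig-zag of $1$-isomorphisms between $(E_2^{\bigcdot,\bigcdot}(M(G)),\dd)$ and $(E_3^{\bigcdot,\bigcdot}(M(G)),0)$, hence identifies $M(G)$ with its cohomology in the $1$-homotopy category.

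For the ``only if'' direction I would argue by contraposition. Assume $G$ has a triangle and suppose, for contradiction, that $M(G)$ is $1$-formal. Then $(E_2^{\bigcdot,\bigcdot}(M(G)),\dd)$ is $1$-equivalent to $(H^\bigcdot(M(G);\Q),0)$, so by invariance of $\R^1$ the equality
\[ \R^1(E_2^{\bigcdot,\bigcdot}(M(G))) = \R^1(H^\bigcdot(M(G);\Q),0) \]
holds inside $H^1(M(G);\Q)$. The goal is then to identify this latter variety with $\R^1(E_3^{\bigcdot,\bigcdot}(M(G)),0)$, because \Cref{lemma:1_formal_no_3} asserts that the $E_2$- and $E_3$-resonance varieties differ.

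To that end I would check that the Leray spectral sequence degenerates in the range $p+q\leq 2$. Using \Cref{thm:AAB14} to kill $E_3^{0,1}$ and $E_3^{0,2}$, and noting that all higher differentials $\dd_r$ with $r\geq 3$ entering or leaving a position with $p+q\leq 2$ have either source or target in the region $p<0$ or $q<0$ (hence zero), one obtains $E_\infty^{p,q}=E_3^{p,q}$ for $p+q\leq 2$. By multiplicativity of the Leray filtration, the cup product $H^1(M(G);\Q)\otimes H^1(M(G);\Q)\to H^2(M(G);\Q)$ lands in $F^2 H^2 = E_\infty^{2,0}=E_3^{2,0}$, and on this quotient it agrees with the $E_3$-product $E_3^{1,0}\otimes E_3^{1,0}\to E_3^{2,0}$. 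Since $\R^1$ is computed from the map $H^1\otimes H^1\to H^2$, this gives $\R^1(H^\bigcdot(M(G);\Q),0) = \R^1(E_3^{\bigcdot,\bigcdot}(M(G)),0)$, contradicting \Cref{lemma:1_formal_no_3}.

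The main obstacle is the bookkeeping in the middle step: one must invoke the \emph{multiplicative} structure of the Leray spectral sequence and verify that no extension-problem subtlety can create a difference between the cup product on $H^\bigcdot(M(G);\Q)$ and the corresponding product on its associated graded in the degrees that feed $\R^1$. Once this is in place, the two preceding lemmas furnish the two implications and the theorem follows.
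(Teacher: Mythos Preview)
Your overall strategy matches the paper's: one direction is \Cref{lemma:no_3_1_formal}, and for the other you want to derive a contradiction from \Cref{lemma:1_formal_no_3} via invariance of $\R^1$ under $1$-equivalence. The paper packages that invariance as the \emph{tangent cone theorem} \cite[Theorem~14]{Suciu16}. However, your justification of this invariance contains a genuine error, and a second step is needlessly complicated.

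\textbf{The gap.} Your parenthetical claim that $\R^1$ ``depends only on $H^0$, $H^1$, and the cup product $H^1\otimes H^1\to H^2$'' is false for CDGAs with nonzero differential, and in fact it is self-refuting in this very setting. By \Cref{thm:Bibby-Dup} the algebras $(E_2^{\bigcdot,\bigcdot}(M(G)),\dd)$ and $(E_3^{\bigcdot,\bigcdot}(M(G)),0)$ have canonically isomorphic cohomology rings, so if $\R^1$ depended only on that data you would get $\R^1(E_2)=\R^1(E_3)$ for \emph{every} $G$, flatly contradicting \Cref{lemma:1_formal_no_3}. The correct statement---that $\R^1$ is invariant under zig-zags of $1$-isomorphisms of connected finite-type CDGAs---is true but nontrivial; it goes through the Malcev Lie algebra / $1$-minimal model and is exactly what the tangent cone theorem supplies. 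You must cite such a result rather than the (incorrect) cohomological description.

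\textbf{The unnecessary step.} Your spectral-sequence bookkeeping to show $\R^1(H^\bigcdot(M(G)))=\R^1(E_3)$ is not needed. \Cref{thm:Bibby-Dup} already gives an isomorphism of graded algebras $H^\bigcdot(M(G);\Q)\cong H^\bigcdot(E_2^{\textnormal{tot}},\dd)=E_3^{\bigcdot,\bigcdot}(M(G))$, so the two resonance varieties coincide on the nose; there is no extension problem to worry about.

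With these two fixes (cite the tangent cone theorem or an equivalent invariance statement for $\R^1$, and replace the degeneration argument by \Cref{thm:Bibby-Dup}), your proof becomes exactly the paper's.
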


\begin{proof}
One implication is proven in \Cref{lemma:no_3_1_formal}, the other follows from \Cref{lemma:1_formal_no_3} together with the tangent cone theorem \cite[Theorem 14]{Suciu16}.
\end{proof}

We have discuss deeply the $1$-formality for graphic elliptic arrangements, but the following questions remain open.
Let $\A$ be an elliptic arrangement.
\begin{question}
When $\R^i(E^{\bigcdot,\bigcdot}_2(M(\A)))$ and $\R^i(E^{\bigcdot,\bigcdot}_3(M(\A)))$ coincides?
\end{question}
\begin{question}
When $M(\A)$ is formal?
\end{question}

As consequence of \Cref{thm:comb_depend} these resonance variety and the property of being $k$-formal, for $k \in \N$, are combinatorial determine, i.e. they depend only on the poset of layers.

\section*{Acknowledgements}
The author would like to thank his advisor Filippo Callegaro, Jacopo (Jack) D'Aurizio, and Angela Veronese for the useful discussions and for the review work.

\printbibliography
\end{document}